%
\input ./style/arxiv-vmsta.cfg
\documentclass[numbers,compress,v1.0.1]{vmsta}

\volume{4}
\issue{1}
\pubyear{2017}
\firstpage{1}
\lastpage{13}
\doi{10.15559/16-VMSTA72}


\startlocaldefs

\newcommand{\rrvert}{\vert}
\newcommand{\llvert}{\vert}
\urlstyle{rm}
\allowdisplaybreaks

\newtheorem{thm}{Theorem}
\newtheorem{lemma}{Lemma}
\newtheorem{cor}{Corollary}
\newtheorem{proposition}{Proposition}
\theoremstyle{definition}
\newtheorem{remark}{Remark}
\endlocaldefs

\begin{document}
\begin{frontmatter}

\title{Large deviations for i.i.d. replications of the total progeny
of a Galton--Watson process}

%
%
%

\author{\inits{C.}\fnm{Claudio}\snm{Macci}\corref{cor1}}\email{macci@mat.uniroma2.it}
\cortext[cor1]{Corresponding author.}
\address{Dipartimento di Matematica, Universit\`a di Roma Tor
Vergata,\\
Via della Ricerca Scientifica, I-00133 Rome, Italy}
\author{\inits{B.}\fnm{Barbara}\snm{Pacchiarotti}}\email{pacchiar@mat.uniroma2.it}

\markboth{C. Macci, B. Pacchiarotti}{Large deviations for i.i.d. replications of the total progeny
of a Galton--Watson process}

\begin{abstract}
The Galton--Watson process is the simplest example of a branching
process. The relationship between the offspring distribution, and,
when the extinction occurs almost surely, the distribution of the
total progeny is well known. In this paper, we illustrate the
relationship between these two distributions when we consider the
large deviation rate function (provided by Cram\'{e}r's theorem)
for empirical means of i.i.d.\ random variables. We also consider
the case with a random initial population. In the final part, we
present large deviation results for sequences of estimators of the
offspring mean based on i.i.d.\ replications of total progeny.
\end{abstract}


\begin{keywords}
\kwd{Cram\'{e}r's theorem}
\kwd{initial random population}
\kwd{estimators of offspring mean}
\end{keywords}
\begin{keywords}[2010]
\kwd{60F10}
\kwd{60J80}
\kwd{62F10}
\kwd{62F12}
\end{keywords}

\received{27 September 2016}
%
\revised{15 December 2016}
%
\accepted{17 December 2016}
\publishedonline{11 January 2017}
\end{frontmatter}

\section{Introduction}
There is a vast literature on branching processes. Here we cite
the monographs \cite{AsmussenHering,AthreyaNey,Harris}; moreover, we also cite the monographs \cite{Mode}
for the multitype case, \cite{Guttorp}, which focuses on
statistical inference, and \cite{Jagers} and \cite{KimmelAxelrod}
for applications in biology.

The simplest example of a branching process is the Galton--Watson
process. We consider the case of a population that has a unique
individual at the beginning and all the individuals (of all
generations) live for a unitary time; moreover, at the end of
their lifetimes, every individual of the population (of every
generation) produces a random number of new individuals acting
independently of all the rest, according to a specific fixed
distribution. So, if we consider a sequence of random variables
$\{V_n:n\geq0\}$ such that $V_n$ is the population size at time
$n$ (for all $n\geq0$),  we have $V_0=1$ and
\[
V_n:=\sum_{k=1}^{V_{n-1}}X_{n,k}
\quad (\mbox{for}\ n\geq1),
\]
where $\{X_{n,i}:n,i\geq1\}$ is a family of nonnegative integer-valued
i.i.d.\ random variables. In other words,
$X_{n,1},\ldots,X_{n,V_{n-1}}$ represent the offspring generated
at time $n$ by each of $V_{n-1}$ individuals that live at time
$n-1$. We recall some other preliminaries on the Galton--Watson process
in Section~\ref{sec:preliminaries}, where, in particular,
we consider a slightly different notation to allow the case
with a random initial population (instead of the case with a
unitary initial population cited before).

In this paper, we present large deviation results. The theory of
large deviations is a collection of techniques that gives an
asymptotic estimate of small probabilities in an exponential scale
(see, e.g., \cite{DemboZeitouni} as a reference). We recall some
preliminaries in Section~\ref{sec:preliminaries}.
The literature on large deviations for branching processes is
large. Here we essentially recall some references with results
concerning the Galton--Watson process.\looseness=-1

In several references, the large-time behavior for the
supercritical case is studied, namely the case where the
offspring mean $\mu$ is strictly larger than one (in such a case,
the extinction probability is strictly less than one). Here we
recall \cite{Athreya} (see also \cite{AthreyaVidyashankar} for
the multitype case), \cite{BigginsBingham}, where the main object
is the study of the tails of $W:=\lim_{n\to\infty}V_n/\mu^n$,
\cite{NeyVidyashankar2003} with a careful analysis based on
harmonic moments of $\{V_n:n\geq0\}$, \cite{NeyVidyashankar2004}
(and \cite{NeyVidyashankar2006}) with some conditional large
deviation results based on some local limit theorems,
\cite{FleischmannWachtel} where the central role of some \lq\lq
lower deviation probabilities\rq\rq\ is highlighted for the study
of the asymptotic behavior of the Lotka--Nagaev estimator
$V_{n+1}/V_n$ of $\mu$.

Other references study the most likely paths to extinction at some
time $n_0$ when the initial population $k$ is large. The idea is
to consider the representation of a branching process with initial
population equal to $k$ as a sum of $k$ i.i.d.\ replications of the
process with a unitary initial population; in this case,
Cram\'{e}r's theorem for empirical means of i.i.d.\ random
variables (on $R^{n_0}$) plays a crucial role. A most likely path
to extinction in \cite{KlebanerLiptser2006} (see also
\cite{KlebanerLiptser2008}) is a trajectory that minimizes the
rate function among the paths that reach the level 0 at time
$n_0$. A generalization of this concept for the most likely paths
to reach a level $b\geq0$ can be found in \cite{HamzaKlebaner}.

In this paper, we are interested in a different direction. Namely,
we are interested in the empirical means of i.i.d.\ replications of
the total progeny of a~Galton--Watson process. The total progenies
of branching processes are studied in several references: here we
cite the old references \cite{Dwass,Kennedy,Pakes} for a~Galton--Watson process,
and \cite{GonzalezMolina}
(see Section~2.2) among the references concerning different
branching processes. The total progeny of a Galton--Watson process
is an almost surely finite random variable when the extinction
occurs almost surely, and therefore the supercritical case will
not be considered. Some relationships between the offspring
distribution and the total progeny distribution of a Galton--Watson
process are well known (see \eqref{eq:link-pmf} for the
probability mass functions and \eqref{eq:link-pgf} for the
probability generating functions).

A new relationship is provided by Proposition
\ref{prop:main-unitary-initial-population}, where we illustrate
how the rate function for the empirical means of total progenies
can be expressed in terms of the analogous rate function for the
empirical means of a single progeny. This is a quite natural
problem to investigate large deviations, and, as we can expect,
\eqref{eq:link-pgf} has an important role in the proof; in fact,
the large deviation rate function for empirical means of i.i.d.\
random variables (provided by Cram\'{e}r's theorem recalled below;
see Theorem \ref{th:Cramer}) is given by the Legendre transform of
the logarithm of the (common) moment generating function of the
random variables. Moreover, the relationship provided by
Proposition \ref{prop:main-unitary-initial-population} can have
interest in information theory because the involved rate functions
can be expressed in terms of suitable relative entropies (or
Kullback--Leibler divergences); see, for example, \cite{Varadhan} for a
discussion on the rate function expressions in terms of the
relative entropy.

Another result presented in this paper is Proposition
\ref{prop:main}, that is a version of Proposition~\ref{prop:main-unitary-initial-population}, where the initial
population $V_0$ is a random variable with a suitable
distribution. Finally, in Propositions \ref{prop:main-estimators}
and \ref{prop:minor-estimators}, we prove large deviation results
for some estimators of the offspring mean $\mu$ in terms of i.i.d.\
replications of the total progeny and of the initial population
(we are considering the case where the initial population $V_0$ is
a random variable as in Proposition \ref{prop:main}).

We conclude with the outline of the paper. We start with some
preliminaries in Section~\ref{sec:preliminaries}. In Section~\ref{sec:applications-CT}, we prove the results concerning the
large deviation rate functions related to Cram\'{e}r's theorem.
Finally, in Section~\ref{sec:estimators}, we prove the large
deviation results for the estimators of the offspring mean $\mu$.

\section{Preliminaries}\label{sec:preliminaries}
We start with some preliminaries on the Galton--Watson process. In the
second part, we recall some preliminaries on large deviations.

\subsection{Preliminaries on Galton--Watson process}
Here we introduce a slightly different notation, and, moreover, we
recall some preliminaries in order to define the total progeny of
a Galton--Watson process.

We start with some notation concerning the offspring distribution
(note that $\mu_f$ defined further coincides with $\mu$ in the
Introduction):
\begin{itemize}
\item the probability mass function $p_h:=P(X_{n,i}=h)$ (for all
integer $h\geq0$);
\item the probability generating function $f(s):=\sum_{h\geq0}s^hp_h$;
\item the mean value $\mu_f:=\sum_{h\geq0}hp_h$ (and we have $\mu
_f=f^\prime(1)$).
\end{itemize}
Moreover, we introduce the analogous items for the initial
population:
\begin{itemize}
\item the probability mass function $\{q_r:r\geq0\}$ (see \eqref
{eq:pmf-initial-population});
\item the probability generating function $g(s):=\sum_{r\geq0}s^rq_r$;
\item the mean value $\mu_g:=\sum_{r\geq0}rq_r$ (and we have $\mu
_g=g^\prime(1)$).
\end{itemize}
So, from now on, we consider the following slightly different
notation:
\[
\bigl\{V_n^{f,g}:n\geq0 \bigr\}\vadjust{\eject}
\]
(in place of $\{V_n:n\geq0\}$ presented before). More precisely:
\begin{itemize}
\item the probability generating function of $V_0^{f,g}$ is $g$ (so
$V_0^{f,g}$ does not depend on $f$), and therefore
\begin{equation}
\label{eq:pmf-initial-population} q_r:=P \bigl(V_0^{f,g}=r \bigr)\quad
(\mbox{for all integer}\ r\geq0);
\end{equation}
\item for a family of i.i.d.\ random variables $\{X_{n,i}:n,i\geq
1\}$ with probability generating function $f$, we have
\[
V_n^{f,g}:=\sum_{i=1}^{V_{n-1}^{f,g}}X_{n,i}
\quad (\mbox{for all}\ n\geq1).
\]
\end{itemize}

\begin{remark}\label{rem:unitary-initial-population}
Note that $ \{V_n^{f,g}:n\geq0 \}$ here corresponds to
$\{V_n:n\geq0\}$ presented in the Introduction if $q_1=1$ or,
equivalently, if $g=\mathrm{id}$ \textup{(}i.e. $g(s)=s$ for all
$s$\textup{)}.
\end{remark}

If we consider the extinction probability
\[
p_{\mathrm{ext}}^{f,g}:=P \bigl( \bigl\{V_n^{f,g}=0
\ \mbox{for some}\ n\geq0 \bigr\} \bigr),
\]
then it is known that we have
\[
p_{\mathrm{ext}}^{f,\mathrm{id}}=\min\bigl\{s\in[0,1]:f(s)=s\bigr\};
\]
moreover, if $p_0>0$, then we have
$p_{\mathrm{ext}}^{f,\mathrm{id}}=1$ if $\mu_f\leq1$ and
$p_{\mathrm{ext}}^{f,\mathrm{id}}\in(0,1)$ if $\mu_f>1$. More
generally, we have
\[
p_{\mathrm{ext}}^{f,g}:=q_0+\sum
_{n\geq1}\bigl(p_{\mathrm{ext}}^{f,\mathrm
{id}}
\bigr)^nq_n=g\bigl(p_{\mathrm{ext}}^{f,\mathrm{id}}\bigr),
\]
and, if $q_0<1$ (we obviously have $p_{\mathrm{ext}}^{f,g}=1$ if
$q_0=1$), then we have the following cases:
\[
\begin{array}{ll}
p_{\mathrm{ext}}^{f,g}=g(0)=q_0&\ \mbox{if}\ p_0=0;\\
p_{\mathrm{ext}}^{f,g}=g(1)=1&\ \mbox{if}\ p_0>0\ \mbox{and}\ \mu_f\leq
1;\\
p_{\mathrm{ext}}^{f,g}\in(q_0,1)&\ \mbox{if}\ p_0>0\ \mbox{and}\
\mu_f>1.
\end{array} %
\]
Then, if $p_0>0$ and $\mu_f\leq1$, then the random variable $Y^{f,g}$
defined by
\[
Y^{f,g}:=\sum_{i=0}^{\tau-1}V_i^{f,g},
\quad \mbox{where}\ \tau:=\inf \bigl\{ n\geq0:V_n^{f,g}=0 \bigr
\},
\]
is almost surely finite and provides the total progeny of
$ \{V_n^{f,g}:n\geq0 \}$. In view of what follows, we
consider the probability generating function
\[
\mathcal{G}_{f,g}(s):=\sum_{k\geq0}s^k
\pi_k^{f,g},
\]
where $ \{\pi_k^{f,g}:k\geq0 \}$ is the probability mass
function of the random variable $Y^{f,g}$. Moreover, we have the
mean value
\begin{equation}
\label{eq:mean-value-total-progeny} \nu^{f,g}:=\sum_{k\geq0}k
\pi_k^{f,g},\quad \mbox{and we have}\quad \nu^{f,g}=
\frac{\mu_g}{1-\mu_f};\vadjust{\eject}
\end{equation}
in particular, $\nu^{f,g}=\frac{\mu_g}{1-\mu_f}$ even if
$\mu_f=1$, namely
\[
\nu^{f,g}= \left\{ %
\begin{array}{@{}ll}
\infty&\ \mbox{if}\ \mu_g>0\ (\mbox{and}\ \mu_f=1)\\
0&\ \mbox{if}\ \mu_g=0\ (\mbox{and}\ \mu_f=1).
\end{array} %
\right.
\]

Finally, we recall some well-known connections between total
progeny and offspring distributions (see e.g. \cite{Dwass}): for
the probability mass functions, we have
\begin{equation}
\label{eq:link-pmf} \pi_k^{f,\mathrm{id}}=\frac{1}{k}\cdot
p_{k-1}^{*k},
\end{equation}
where $\{p_h^{*n}:h\geq0\}$ is the $n$th power of convolution of
$\{p_h:h\geq0\}$; for the probability generating functions, we
have
\begin{equation}
\label{eq:link-pgf} \mathcal{G}_{f,\mathrm{id}}(s)=sf\bigl(\mathcal{G}_{f,\mathrm{id}}(s)
\bigr).
\end{equation}

\subsection{Preliminaries on large deviations}
We start with the concept of large deviation principle (LDP). A
sequence of random variables $\{W_n:n\geq1\}$ taking
values in a topological space $\mathcal{W}$ satisfies the LDP with
rate function $I:\mathcal{W}\to[0,\infty]$ if $I$ is a lower
semicontinuous function,
\[
\liminf_{n\to\infty}\frac{1}{n}\log P(W_n\in O)
\geq-\inf_{w\in O}I(w)\quad \mbox{for all open sets}\ O,
\]
and
\[
\limsup_{n\to\infty}\frac{1}{n}\log P(W_n\in C)
\leq-\inf_{w\in C}I(w)\quad \mbox{for all closed sets}\ C.
\]
We also recall that a rate function $I$ is said to be good if all
its level sets $\{\{w\in\mathcal{W}:I(w)\leq\eta\}:\eta\geq0\}$
are compact.

\begin{remark}\label{rem:closed-sets-with-probability-1}
If $P(W_n\in S)=1$ for some closed set $S$ (at least
eventually with respect to $n$), then $I(w)=\infty$ for
$w\notin S$; this can be checked by taking the lower bound for
the open set $O=S^c$.
\end{remark}

In particular, we refer to Cram\'{e}r's theorem on $\mathbb{R}^d$
(see e.g. Theorems 2.2.3\break and 2.2.30 in \cite{DemboZeitouni} for
the cases $d=1$ and $d\geq2$), and we recall its statement. We
remark that, in this paper, we consider the cases $d=1$ (in such a
case, the rate function need not to be a good rate function) and
$d=2$. Moreover, we use the symbol
$\langle\cdot,\cdot\rangle$ for the inner product in
$\mathbb{R}^d$.

\begin{thm}[Cram\'{e}r's theorem]\label{th:Cramer}
Let $\{W_n:n\geq1\}$ be a sequence of i.i.d.\
$\mathbb{R}^d$-valued random variables, and let $\{\bar{W}_n:n\geq1\}$
be the
sequence of empirical means defined by
$\bar{W}_n:=\frac{1}{n}\sum_{k=1}^nW_k$ \textup{(}for all $n\geq1)$.

\textup{(i)} If $d=1$, then $\{\bar{W}_n:n\geq1\}$ satisfies the LDP with
rate function $I$ defined by
\[
I(w):=\sup_{\theta\in\mathbb{R}} \bigl\{\theta w-\log\mathbb{E}
\bigl[e^{\theta W_1} \bigr] \bigr\}.
\]

\textup{(ii)} If $d\geq2$ and the origin of $\mathbb{R}^d$ belongs to the
interior of the set
$\{\theta\in\mathbb{R}^d:\log\mathbb{E} [e^{\langle\theta,W_1\rangle
} ]<\infty\}$,
then $\{\bar{W}_n:n\geq1\}$ satisfies the LDP with good rate
function $I$ defined by
\[
I(w):=\sup_{\theta\in\mathbb{R}^d} \bigl\{\langle\theta,w\rangle-\log \mathbb{E}
\bigl[e^{\langle\theta,W_1\rangle} \bigr] \bigr\}.
\]
\end{thm}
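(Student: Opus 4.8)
The plan is to establish the two halves of the large deviation principle separately, with the Fenchel--Legendre transform playing the role of the rate function. Write $\Lambda(\theta):=\log\mathbb{E}[e^{\langle\theta,W_1\rangle}]$ for the logarithmic moment generating function of $W_1$ and $\mathcal{D}:=\{\theta\in\mathbb{R}^d:\Lambda(\theta)<\infty\}$ for its effective domain, so that the candidate rate function is $I=\Lambda^*$, $\Lambda^*(w)=\sup_{\theta\in\mathbb{R}^d}\{\langle\theta,w\rangle-\Lambda(\theta)\}$. Being a supremum of affine functions, $\Lambda^*$ is convex and lower semicontinuous, hence a rate function; when the origin lies in the interior of $\mathcal{D}$ (the hypothesis of (ii)), a short direct estimate shows it is moreover good. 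It then remains to prove the large deviation upper bound for closed sets and the lower bound for open sets.

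For the upper bound the basic tool is the exponential Chebyshev inequality: since $\mathbb{E}[e^{n\langle\theta,\bar W_n\rangle}]=e^{n\Lambda(\theta)}$, one gets $P(\bar W_n\in A)\le\exp(-n\inf_{w\in A}(\langle\theta,w\rangle-\Lambda(\theta)))$ for any $\theta$ whenever $\langle\theta,\cdot\rangle$ is bounded below on $A$. When $d=1$ this suffices with no integrability assumption: $\Lambda^*$ is nonincreasing to the left of the mean $\mathbb{E}[W_1]$ and nondecreasing to its right (with the usual conventions when the mean is infinite or undefined), so any closed $F\subseteq\mathbb{R}$ splits, around the mean, into a piece contained in some half-line $(-\infty,w_-]$ and a piece contained in some $[w_+,\infty)$, to which the inequality applies with $\theta\le0$ and $\theta\ge0$ respectively, bounding $P(\bar W_n\in F)$ by $2\exp(-n\inf_{w\in F}\Lambda^*(w))$. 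When $d\ge2$ I would first prove the bound for compact sets by a covering argument: around each $w$ pick a small ball $B(w,\delta)$ and a nearly optimal $\theta$ in the definition of $\Lambda^*(w)$, so that Chebyshev gives $\frac1n\log P(\bar W_n\in B(w,\delta))\le-\Lambda^*(w)+\varepsilon$ (or $\le-M$ when $\Lambda^*(w)=\infty$); a finite subcover and $\delta,\varepsilon\to0$ then give the compact-set bound, which is promoted to all closed sets by exponential tightness. The latter follows from the origin being interior to $\mathcal{D}$: testing with $\theta=\pm\rho e_i$ for small $\rho>0$ shows $P(\|\bar W_n\|>L)$ decays exponentially in $nL$.

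For the lower bound it suffices to show that for every $w\in\mathbb{R}^d$ and every $\delta>0$ one has $\liminf_{n}\frac1n\log P(\bar W_n\in B(w,\delta))\ge-\Lambda^*(w)$; then one takes $B(w,\delta)\subseteq O$ and a supremum over $w\in O$. The heart of the proof is the change-of-measure (exponential tilting) argument. Suppose $w$ is such that the supremum defining $\Lambda^*(w)$ is attained at some $\theta_w$ in the interior of $\mathcal{D}$, so that $\nabla\Lambda(\theta_w)=w$, and let $\tilde\mu$ be the probability law with $\frac{d\tilde\mu}{d\mu}(x)=e^{\langle\theta_w,x\rangle-\Lambda(\theta_w)}$, where $\mu$ is the law of $W_1$. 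The $\tilde\mu$-mean of $W_1$ equals $\nabla\Lambda(\theta_w)=w$, so under $\tilde\mu^{\otimes n}$ the law of large numbers gives $\bar W_n\to w$; writing $P(\bar W_n\in B(w,\delta'))$ as the integral against $\tilde\mu^{\otimes n}$ of $\exp(-n\langle\theta_w,\bar W_n\rangle+n\Lambda(\theta_w))$ over the event $\{\bar W_n\in B(w,\delta')\}$ and using $\langle\theta_w,\bar W_n\rangle\le\langle\theta_w,w\rangle+\|\theta_w\|\delta'$ there yields $\frac1n\log P(\bar W_n\in B(w,\delta'))\ge-\Lambda^*(w)-\|\theta_w\|\delta'+\frac1n\log\tilde\mu^{\otimes n}(\bar W_n\in B(w,\delta'))$; letting $n\to\infty$ and then $\delta'\downarrow0$ (taken with $\delta'\le\delta$) gives the claim at such $w$. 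The remaining trivial case $w=\mathbb{E}[W_1]$, where $\Lambda^*(w)=0$, follows from the law of large numbers under $\mu$ itself.

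\textbf{The main obstacle} will be reaching all the remaining points $w$, i.e.\ deducing the lower bound at an arbitrary point of an open set from the bound at these special (\emph{exposed}) points. This is the genuinely convex-analytic step: one uses properties of the Legendre transform to show that exposed points of $\Lambda^*$ whose exposing hyperplane lies in the interior of $\mathcal{D}$ already suffice to compute $\inf_{w\in O}\Lambda^*(w)$ for open $O$, after a preliminary reduction to the case $\mathcal{D}=\mathbb{R}^d$ by truncation of $W_1$ and, in case (ii), use of the hypothesis that the origin is interior to $\mathcal{D}$. For $d=1$ this reduction is elementary (monotonicity and continuity of $\Lambda^*$ on its domain), which is exactly why part (i) needs no integrability hypothesis; for $d\ge2$ the condition that the origin belongs to the interior of $\mathcal{D}$ is precisely what makes the exponential tightness, this convex-analytic reduction, and the goodness of $\Lambda^*$ all go through.
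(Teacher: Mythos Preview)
The paper does not prove this theorem at all: it simply recalls the statement and cites Theorems 2.2.3 and 2.2.30 in Dembo--Zeitouni for the cases $d=1$ and $d\geq2$, respectively. So there is no ``paper's own proof'' to compare against.

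That said, your sketch is essentially the standard Dembo--Zeitouni argument and is sound in outline. A couple of small points worth tightening if you were to write it out in full: in the $d=1$ upper bound, the split of a closed set $F$ around the mean only works cleanly when $\mathbb{E}[W_1]\notin F$ (otherwise $\inf_F\Lambda^*=0$ and there is nothing to prove), and one must also handle the cases where $\mathbb{E}[W_1]=\pm\infty$ or is undefined; and in the lower bound, the reduction to exposed points via truncation and convex analysis is indeed the delicate step you flag, and in $d=1$ it is usually handled not by monotonicity alone but by the subexponential decay $\lim_n\frac1n\log P(W_1\in B(w,\delta))=0$ for $w$ in the support, combined with a one-dimensional convexity argument. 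None of this is a genuine gap in your plan, just places where the details need care.
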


\section{Applications of Cram\'{e}r's theorem}\label{sec:applications-CT}
The aim of this section is to prove Propositions
\ref{prop:main-unitary-initial-population} and \ref{prop:main}. In
view of this, we recall Lemmas \ref{lem:LDP-offspring} and
\ref{lem:LDP-totalprogeny}, which give two immediate applications
of Cram\'{e}r's theorem (Theorem \ref{th:Cramer}) with $d=1$; in
Lemma \ref{lem:LDP-totalprogeny}, we consider the case with a
unitary initial population almost surely (thus, as stated Remark
\ref{rem:unitary-initial-population}, the case with $q_1=1$ or,
equivalently, $g=\mathrm{id}$).

\begin{lemma}[Cram\'{e}r's theorem for offspring distribution]\label
{lem:LDP-offspring}
Let $\{X_n:n\geq1\}$ be i.i.d.\ random variables with probability
generating function $f$. Let $\{\bar{X}_n:n\geq1\}$ be the
sequence of empirical means defined by
$\bar{X}_n:=\frac{1}{n}\sum_{k=1}^nX_k$ \textup{(}for all $n\geq1)$. Then
$\{\bar{X}_n:n\geq1\}$ satisfies the LDP with rate function $I_f$
defined by $I_f(x):=\sup_{\alpha\in\mathbb{R}}\{\alpha x-\log
f(e^\alpha)\}$.
\end{lemma}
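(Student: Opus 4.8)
The plan is to read off the statement directly from part~(i) of Cram\'er's theorem (Theorem~\ref{th:Cramer}), applied to the i.i.d.\ sequence $\{W_n:n\geq1\}:=\{X_n:n\geq1\}$ of $\mathbb{R}$-valued (in fact $\{0,1,2,\ldots\}$-valued) random variables, whose empirical means $\bar W_n=\frac1n\sum_{k=1}^nW_k$ coincide with the $\bar X_n$ of the statement. By that theorem, $\{\bar X_n:n\geq1\}$ satisfies the LDP on $\mathbb{R}$ with rate function $I(x)=\sup_{\theta\in\mathbb{R}}\{\theta x-\log\mathbb{E}[e^{\theta X_1}]\}$, so the only remaining task is to rewrite the logarithmic moment generating function $\theta\mapsto\log\mathbb{E}[e^{\theta X_1}]$ in terms of the probability generating function $f$.

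For this, I would observe that, since $P(X_1=h)=p_h$ for every integer $h\geq0$,
\[
\mathbb{E}\bigl[e^{\theta X_1}\bigr]=\sum_{h\geq0}e^{\theta h}p_h=\sum_{h\geq0}\bigl(e^\theta\bigr)^hp_h=f\bigl(e^\theta\bigr)\qquad(\theta\in\mathbb{R}),
\]
where the identity is understood in $[0,\infty]$: for $\theta\leq0$ we have $e^\theta\in(0,1]$ and the series is bounded by $f(1)=1$, while for $\theta>0$ the series $\sum_{h\geq0}(e^\theta)^hp_h$ has nonnegative terms and is therefore well defined in $[0,\infty]$, equal to the value at $s=e^\theta$ of the (possibly infinite) extension of the power series $f(s)=\sum_{h\geq0}s^hp_h$. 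Hence $\log\mathbb{E}[e^{\theta X_1}]=\log f(e^\theta)$ for all $\theta\in\mathbb{R}$, with the convention $\log(+\infty)=+\infty$, so that the values of $\theta$ for which $f(e^\theta)=\infty$ simply do not contribute to the supremum.

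Substituting into the expression for $I$ and relabeling the supremum variable $\theta$ as $\alpha$ yields
\[
I(x)=\sup_{\theta\in\mathbb{R}}\bigl\{\theta x-\log f\bigl(e^\theta\bigr)\bigr\}=\sup_{\alpha\in\mathbb{R}}\bigl\{\alpha x-\log f\bigl(e^\alpha\bigr)\bigr\}=I_f(x),
\]
which is the claim. There is no real obstacle: the lemma is an immediate corollary of Cram\'er's theorem, and the one point worth stating carefully is that $\mathbb{E}[e^{\theta X_1}]=f(e^\theta)$ even when the right-hand side equals $+\infty$, which is justified by the nonnegativity of the summands (monotone convergence). As in Theorem~\ref{th:Cramer}(i), no integrability assumption on $f$ is required, and $I_f$ need not be a good rate function.
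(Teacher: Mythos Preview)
Your proposal is correct and matches the paper's approach: the paper does not give a separate proof of this lemma but simply presents it as an immediate application of Cram\'er's theorem (Theorem~\ref{th:Cramer}) with $d=1$, which is exactly what you do. Your only addition is the explicit identification $\mathbb{E}[e^{\theta X_1}]=f(e^\theta)$, which is the obvious link the paper leaves implicit.
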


\begin{lemma}[Cram\'{e}r's theorem for total progeny distribution with
$g=\mathrm{id}$]\label{lem:LDP-totalprogeny}
Assume that $p_0>0$ and $\mu_f\leq1$. Let $\{Y_n:n\geq1\}$ be
i.i.d.\ random variables with probability generating function
$\mathcal{G}_{f,\mathrm{id}}$. Let $\{\bar{Y}_n:n\geq1\}$ be the
sequence of empirical means defined by
$\bar{Y}_n:=\frac{1}{n}\sum_{k=1}^nY_k$ \textup{(}for all $n\geq1)$. Then
$\{\bar{Y}_n:n\geq1\}$ satisfies the LDP with rate function
$I_{\mathcal{G}_{f,\mathrm{id}}}$ defined by
$I_{\mathcal{G}_{f,\mathrm{id}}}(y):=\sup_{\beta\in\mathbb{R}}\{\beta
y-\log\mathcal{G}_{f,\mathrm{id}}(e^\beta)\}$.
\end{lemma}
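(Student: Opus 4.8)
The plan is to observe that this is nothing more than a direct invocation of Cram\'{e}r's theorem (Theorem~\ref{th:Cramer}, part~(i), the case $d=1$) applied to the i.i.d.\ sequence $\{Y_n:n\geq1\}$. The only things one needs to check are: first, that the random variables $Y_n$ are genuinely real-valued (in fact integer-valued), so that Theorem~\ref{th:Cramer}(i) applies; and second, that the moment generating function $\mathbb{E}[e^{\beta Y_1}]$ is expressed in terms of the probability generating function $\mathcal{G}_{f,\mathrm{id}}$ via $\mathbb{E}[e^{\beta Y_1}]=\sum_{k\geq0}e^{\beta k}\pi_k^{f,\mathrm{id}}=\mathcal{G}_{f,\mathrm{id}}(e^\beta)$, so that the rate function supplied by Cram\'{e}r's theorem, namely $\sup_{\beta\in\mathbb{R}}\{\beta y-\log\mathbb{E}[e^{\beta Y_1}]\}$, coincides with the claimed $I_{\mathcal{G}_{f,\mathrm{id}}}(y)=\sup_{\beta\in\mathbb{R}}\{\beta y-\log\mathcal{G}_{f,\mathrm{id}}(e^\beta)\}$.

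First I would recall the hypothesis $p_0>0$ and $\mu_f\leq1$: this is exactly the condition under which the total progeny $Y^{f,\mathrm{id}}$ is almost surely finite (as noted in the preliminaries), so that $\{\pi_k^{f,\mathrm{id}}:k\geq0\}$ is a bona fide probability mass function and $\mathcal{G}_{f,\mathrm{id}}(1)=\sum_{k\geq0}\pi_k^{f,\mathrm{id}}=1$. This guarantees that the $Y_n$ are well-defined $\mathbb{Z}_{\geq0}$-valued (hence $\mathbb{R}$-valued) random variables, which is all that part~(i) of Theorem~\ref{th:Cramer} requires --- note that no finiteness of the moment generating function away from $0$ is needed in the one-dimensional case. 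Then I would apply Theorem~\ref{th:Cramer}(i) to $\{W_n:n\geq1\}:=\{Y_n:n\geq1\}$, whose empirical means are precisely the $\bar{Y}_n$, to conclude that $\{\bar{Y}_n:n\geq1\}$ satisfies the LDP with rate function $y\mapsto\sup_{\beta\in\mathbb{R}}\{\beta y-\log\mathbb{E}[e^{\beta Y_1}]\}$.

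Finally I would substitute $\mathbb{E}[e^{\beta Y_1}]=\mathcal{G}_{f,\mathrm{id}}(e^\beta)$ into the expression above to obtain exactly $I_{\mathcal{G}_{f,\mathrm{id}}}$ as stated. There is essentially no obstacle here: the lemma is a bookkeeping statement packaging Cram\'{e}r's theorem in the notation of total-progeny generating functions, and its only role is to serve as an ingredient (alongside Lemma~\ref{lem:LDP-offspring}) in the proof of Proposition~\ref{prop:main-unitary-initial-population}, where the genuine work --- relating $I_{\mathcal{G}_{f,\mathrm{id}}}$ to $I_f$ through the functional equation~\eqref{eq:link-pgf} --- takes place. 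If anything merits a remark, it is that $I_{\mathcal{G}_{f,\mathrm{id}}}(y)=\infty$ for $y<0$ and, when $\mu_f\le1$, the minimum of the rate function is attained at the mean $\nu^{f,\mathrm{id}}=\frac{1}{1-\mu_f}$ (with the convention that this is $+\infty$ when $\mu_f=1$), but this follows from Remark~\ref{rem:closed-sets-with-probability-1} and general properties of Cram\'{e}r rate functions and need not be belabored.
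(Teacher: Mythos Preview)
Your proposal is correct and matches the paper's treatment: the paper does not give a separate proof of this lemma but simply presents it as an ``immediate application of Cram\'{e}r's theorem (Theorem~\ref{th:Cramer}) with $d=1$,'' which is exactly what you do. Your additional remarks (finiteness of $Y^{f,\mathrm{id}}$ under the hypotheses, the identification $\mathbb{E}[e^{\beta Y_1}]=\mathcal{G}_{f,\mathrm{id}}(e^\beta)$) spell out details the paper leaves implicit, but the approach is the same.
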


Now we can prove our main results. We start with Proposition
\ref{prop:main-unitary-initial-population}, which provides an
expression for $I_{\mathcal{G}_{f,\mathrm{id}}}$ in terms of
$I_f$.

\begin{proposition}\label{prop:main-unitary-initial-population}
Let $I_f$ and $I_{\mathcal{G}_{f,\mathrm{id}}}$ be the rate
functions in Lemmas \ref{lem:LDP-offspring} and
\ref{lem:LDP-totalprogeny}. Then we have
$I_{\mathcal{G}_{f,\mathrm{id}}}(y)=yI_f (\frac{y-1}{y} )$
for all $y\geq1$.
\end{proposition}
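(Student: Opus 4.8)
The plan is to work directly from the definitions of the two rate functions as Legendre transforms and to exploit the functional equation \eqref{eq:link-pgf}, namely $\mathcal{G}_{f,\mathrm{id}}(s)=sf(\mathcal{G}_{f,\mathrm{id}}(s))$. Writing $I_{\mathcal{G}_{f,\mathrm{id}}}(y)=\sup_{\beta\in\mathbb{R}}\{\beta y-\log\mathcal{G}_{f,\mathrm{id}}(e^\beta)\}$, I would substitute $s=e^\beta$ and, using the functional equation, rewrite $\log\mathcal{G}_{f,\mathrm{id}}(e^\beta)=\beta+\log f(\mathcal{G}_{f,\mathrm{id}}(e^\beta))$. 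The natural move is then to change variables from $\beta$ to $t:=\log\mathcal{G}_{f,\mathrm{id}}(e^\beta)$, i.e. to take $\mathcal{G}_{f,\mathrm{id}}(e^\beta)=e^t$ as the new supremum variable. One must check that as $\beta$ ranges over the relevant interval, $t$ ranges over a set rich enough to recover the full supremum defining $I_f$; this is where the monotonicity and range of $\mathcal{G}_{f,\mathrm{id}}$ on $(0,\infty)$ come in, and it is the step I expect to require the most care.

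Concretely, from $e^t=\mathcal{G}_{f,\mathrm{id}}(e^\beta)=e^\beta f(e^t)$ we get $e^\beta=e^t/f(e^t)$, hence $\beta=t-\log f(e^t)$. Substituting into the objective, $\beta y-\log\mathcal{G}_{f,\mathrm{id}}(e^\beta)=(t-\log f(e^t))y-t=ty-y\log f(e^t)-t=(y-1)t-y\log f(e^t)$. Factoring out $y$ (legitimate since $y\geq1>0$) gives $y\bigl[\tfrac{y-1}{y}\,t-\log f(e^t)\bigr]$. Taking the supremum over the appropriate range of $t$ then yields $y\sup_t\{\tfrac{y-1}{y}t-\log f(e^t)\}=yI_f\bigl(\tfrac{y-1}{y}\bigr)$, which is exactly the claimed identity. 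So the algebra itself is short once the change of variables is set up correctly.

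The main obstacle, as indicated, is justifying that the change of variables $\beta\mapsto t$ is a bijection between the domain over which $\beta$ effectively ranges in the first supremum and the domain over which $\alpha$ effectively ranges in the definition of $I_f$ (here $t$ plays the role of $\alpha$). Since $\mathcal{G}_{f,\mathrm{id}}$ is a probability generating function of an a.s. finite random variable with $\mathcal{G}_{f,\mathrm{id}}(1)=1$, it is continuous, strictly increasing, and convex on $(0,\infty)$ with $\mathcal{G}_{f,\mathrm{id}}(0^+)=\pi_0^{f,\mathrm{id}}=f(0)=p_0>0$; likewise $f$ is strictly increasing and convex on $(0,\infty)$ with $f(0)=p_0>0$. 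One should argue that the map $t\mapsto e^\beta=e^t/f(e^t)$ is well-defined and that, together with the boundary/limit behavior, the supremum over $\beta\in\mathbb{R}$ transforms into the supremum over $t\in\mathbb{R}$ (the relevant $s=e^t$-values covering $(0,\infty)$), possibly up to endpoints where the objective may diverge to $-\infty$ and hence contribute nothing to the supremum. A clean way to handle the endpoints is to note that both suprema are over continuous concave-in-the-parameter functions, so it suffices to match them on an open interval and pass to closures; alternatively one can simply verify the inequality $I_{\mathcal{G}_{f,\mathrm{id}}}(y)\le yI_f(\tfrac{y-1}{y})$ and $\ge$ separately, each direction using one substitution. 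I would also briefly record why the restriction $y\geq1$ is the natural one: $\mathcal{G}_{f,\mathrm{id}}$ corresponds to a random variable taking values in $\{1,2,3,\dots\}$ (since $\pi_k^{f,\mathrm{id}}=\tfrac1k p_{k-1}^{*k}$ vanishes for $k=0$), so by Remark~\ref{rem:closed-sets-with-probability-1} we have $I_{\mathcal{G}_{f,\mathrm{id}}}(y)=\infty$ for $y<1$, and $\tfrac{y-1}{y}\ge 0$ exactly when $y\ge 1$, matching the support $\{0,1,2,\dots\}$ of the offspring law.
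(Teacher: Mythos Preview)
Your proposal is correct and follows essentially the same route as the paper: both arguments hinge on the change of variables $t=\alpha=\log\mathcal{G}_{f,\mathrm{id}}(e^\beta)$ (which the paper verifies is a bijection $\mathcal{D}(\mathcal{G}_{f,\mathrm{id}})\to\mathcal{D}(f)$) together with the functional equation~\eqref{eq:link-pgf}, and the resulting algebra is identical up to whether one starts from $I_f$ (as the paper does) or from $I_{\mathcal{G}_{f,\mathrm{id}}}$ (as you do). One small slip to fix: $\mathcal{G}_{f,\mathrm{id}}(0^+)=\pi_0^{f,\mathrm{id}}=0$, not $p_0$, as you yourself note later when observing that $Y^{f,\mathrm{id}}\ge 1$; this does not affect the main argument.
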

\begin{proof}
We remark that
\[
I_f(x):=\sup_{\alpha\in\mathcal{D}(f)}\bigl\{\alpha x-\log f
\bigl(e^\alpha\bigr)\bigr\},
\]
where
$\mathcal{D}(f):=\{\alpha\in\mathbb{R}:f(e^\alpha)<\infty\}$, and
\[
I_{\mathcal{G}_{f,\mathrm{id}}}(x):=\sup_{\beta\in\mathcal{D}(\mathcal
{G}_{f,\mathrm{id}})}\bigl\{\beta y-\log
\mathcal{G}_{f,\mathrm{id}}\bigl(e^\beta \bigr)\bigr\},
\]
where
$\mathcal{D}(\mathcal{G}_{f,\mathrm{id}}):=\{\beta\in\mathbb{R}:\mathcal
{G}_{f,\mathrm{id}}(e^\beta)<\infty\}$,
by Lemmas \ref{lem:LDP-offspring} and \ref{lem:LDP-totalprogeny},
respectively.

Moreover, the function
$\alpha:\mathcal{D}(\mathcal{G}_{f,\mathrm{id}})\to\mathcal{D}(f)$
defined by
\[
\alpha(\beta):=\log\mathcal{G}_{f,\mathrm{id}}\bigl(e^\beta\bigr)
\]
is a bijection. This can be checked noting that
$\alpha(\beta)\in\mathcal{D}(f)$ (for all
$\beta\in\mathcal{D}(\mathcal{G}_{f,\mathrm{id}})$) because
$f(e^{\alpha(\beta)})=f(\mathcal{G}_{f,\mathrm{id}}(e^\beta))=\frac
{\mathcal{G}_{f,\mathrm{id}}(e^\beta)}{e^\beta}<\infty$
(here we take into account \eqref{eq:link-pgf}); moreover, its
inverse
$\beta:\mathcal{D}(f)\to\mathcal{D}(\mathcal{G}_{f,\mathrm{id}})$
is defined by
\[
\beta(\alpha):=\log\mathcal{G}_{f,\mathrm{id}}^{-1}
\bigl(e^\alpha\bigr)
\]
(where $\mathcal{G}_{f,\mathrm{id}}^{-1}$ is the inverse of
$\mathcal{G}_{f,\mathrm{id}}$), and
$\beta(\alpha)\in\mathcal{D}(\mathcal{G}_{f,\mathrm{id}})$ (for
all $\alpha\in\mathcal{D}(f)$) because
$\mathcal{G}_{f,\mathrm{id}}(e^{\beta(\alpha)})=e^\alpha<\infty$.

Thus, we can set $\alpha=\log\mathcal{G}_{f,\mathrm{id}}(e^\beta)$
(for $\beta\in\mathcal{D}(\mathcal{G}_{f,\mathrm{id}})$) in the
expression of $I_f(x)$, and we get
\[
I_f(x)=\sup_{\beta\in\mathcal{D}(\mathcal{G}_{f,\mathrm{id}})}\bigl\{\log
\mathcal{G}_{f,\mathrm{id}}\bigl(e^\beta\bigr)x-\log f\bigl(
\mathcal{G}_{f,\mathrm
{id}}\bigl(e^\beta\bigr)\bigr)\bigr\}.
\]
Then (we take into account \eqref{eq:link-pgf} in the second
equality below)
\begin{align*}
I_f(x)&=\sup_{\beta\in\mathcal{D}(\mathcal{G}_{f,\mathrm{id}})}\bigl\{\log
\mathcal{G}_{f,\mathrm{id}}\bigl(e^\beta\bigr)x-\log(e^{-\beta}e^\beta
f\bigl(\mathcal{G}_{f,\mathrm{id}}\bigl(e^\beta\bigr)\bigr)\bigr\}
\\
&=\sup_{\beta\in\mathcal{D}(\mathcal{G}_{f,\mathrm{id}})}\bigl\{\log\mathcal {G}_{f,\mathrm{id}}
\bigl(e^\beta\bigr)x+\beta-\log\mathcal{G}_{f,\mathrm
{id}}
\bigl(e^\beta\bigr)\bigr\}
\\
&=\sup_{\beta\in\mathcal{D}(\mathcal{G}_{f,\mathrm{id}})}\bigl\{\beta -(1-x)\log\mathcal{G}_{f,\mathrm{id}}
\bigl(e^\beta\bigr)\bigr\},
\end{align*}
and, for $x\in[0,1)$, we get
\[
I_f(x)=(1-x)I_{\mathcal{G}_{f,\mathrm{id}}} \biggl(\frac{1}{1-x} \biggr).
\]
We conclude by taking $x=\frac{y-1}{y}$ for $y\geq1$ (thus,
$x\in[0,1)$), and we obtain the desired equality with some easy
computations.
\end{proof}

Now we present Proposition \ref{prop:main}, which concerns the LDP
for the empirical means of i.i.d. bivariate random variables
$\{(Y_n,Z_n):n\geq1\}$ distributed as $(Y^{f,g},V_0^{f,g})$. In
particular, we obtain an expression for the rate function
$I_{\mathcal{G}_{f,g},g}$ in terms of $I_f$ in Lemma
\ref{lem:LDP-offspring} and $I_g$ defined by
\begin{equation}
\label{def:rf-initial-population} I_g(z):=\sup_{\gamma\in\mathbb{R}}\bigl\{\gamma z-
\log g\bigl(e^\gamma\bigr)\bigr\}.
\end{equation}

\begin{proposition}\label{prop:main}
Let $\{(Y_n,Z_n):n\geq1\}$ be i.i.d.\ random variables distributed
as $(Y^{f,g},V_0^{f,g})$. Assume that $\mathbb{E} [e^{\beta
Y^{f,g}+\gamma V_0^{f,g}} ]$ is finite in a neighborhood of
$(\beta,\gamma)=(0,0)$. Let $\{(\bar{Y}_n,\bar{Z}_n):n\geq1\}$ be
the sequence of empirical means defined by\break
$(\bar{Y}_n,\bar{Z}_n):= (\frac{1}{n}\sum_{k=1}^nY_k,\frac{1}{n}\sum_{k=1}^nZ_k )$
\textup{(}for all $n\geq1)$. Then $\{(\bar{Y}_n,\bar{Z}_n):n\geq1\}$
satisfies the LDP with good rate function
$I_{\mathcal{G}_{f,g},g}$ defined by
\[
I_{\mathcal{G}_{f,g},g}(y,z)= \left\{ %
\begin{array}{@{}ll}
yI_f (\frac{y-z}{y} )+I_g(z)&\ \mbox{if}\ y\geq z>0,\\
I_g(0)&\ \mbox{if}\ y=z=0,\\
\infty&\ \mbox{otherwise}.
\end{array} %
\right.
\]
\end{proposition}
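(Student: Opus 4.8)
The plan is to derive the LDP from Cram\'er's theorem and then to carry out the Legendre-transform computation explicitly. The hypothesis that $\mathbb{E}[e^{\beta Y^{f,g}+\gamma V_0^{f,g}}]$ is finite in a neighbourhood of $(0,0)$ is precisely the requirement that the origin of $\mathbb{R}^2$ lie in the interior of the domain of the joint cumulant generating function $\Lambda(\beta,\gamma):=\log\mathbb{E}[e^{\beta Y^{f,g}+\gamma V_0^{f,g}}]$, so Theorem~\ref{th:Cramer}(ii) with $d=2$ applies and gives the LDP for $\{(\bar Y_n,\bar Z_n):n\geq1\}$ with good rate function $I_{\mathcal{G}_{f,g},g}(y,z)=\sup_{(\beta,\gamma)}\{\beta y+\gamma z-\Lambda(\beta,\gamma)\}$. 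It then remains to (i) identify $\Lambda$ and (ii) evaluate this supremum.

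For (i) I would use the branching structure together with the definition of the total progeny: conditionally on $V_0^{f,g}=r$, the population at time $0$ consists of $r$ individuals evolving as independent copies of the process with a single ancestor, so $Y^{f,g}$ is distributed (given $V_0^{f,g}=r$) as a sum of $r$ i.i.d.\ copies of $Y^{f,\mathrm{id}}$; hence $\mathbb{E}[e^{\beta Y^{f,g}+\gamma V_0^{f,g}}\mid V_0^{f,g}=r]=(e^{\gamma}\mathcal{G}_{f,\mathrm{id}}(e^{\beta}))^{r}$, and summing against $\{q_r\}$ yields $\Lambda(\beta,\gamma)=\log g(e^{\gamma}\mathcal{G}_{f,\mathrm{id}}(e^{\beta}))$, with effective domain determined by $\mathcal{D}(\mathcal{G}_{f,\mathrm{id}})$ and $\mathcal{D}(g)$ (note $p_0>0$ and $\mu_f\leq1$ are implicitly in force, so that $Y^{f,g}$ is a.s.\ finite and Lemma~\ref{lem:LDP-totalprogeny} is available).

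For (ii) on the region $y\geq z>0$ I would optimise over $\gamma$ first: for fixed $\beta$ with $\mathcal{G}_{f,\mathrm{id}}(e^{\beta})<\infty$, the substitution $\delta:=\gamma+\log\mathcal{G}_{f,\mathrm{id}}(e^{\beta})$ turns $\sup_{\gamma}\{\gamma z-\log g(e^{\gamma}\mathcal{G}_{f,\mathrm{id}}(e^{\beta}))\}$ into $I_g(z)-z\log\mathcal{G}_{f,\mathrm{id}}(e^{\beta})$, so that $I_{\mathcal{G}_{f,g},g}(y,z)=I_g(z)+z\sup_{\beta}\{\beta (y/z)-\log\mathcal{G}_{f,\mathrm{id}}(e^{\beta})\}=I_g(z)+z\,I_{\mathcal{G}_{f,\mathrm{id}}}(y/z)$, with $I_{\mathcal{G}_{f,\mathrm{id}}}$ the rate function of Lemma~\ref{lem:LDP-totalprogeny}. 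Since $y/z\geq1$, Proposition~\ref{prop:main-unitary-initial-population} gives $I_{\mathcal{G}_{f,\mathrm{id}}}(y/z)=(y/z)\,I_f((y-z)/y)$, and multiplying by $z$ produces exactly $y\,I_f((y-z)/y)+I_g(z)$. The value at $(0,0)$ is then immediate: letting $\gamma\to-\infty$ drives $g(e^{\gamma}\mathcal{G}_{f,\mathrm{id}}(e^{\beta}))$ to its infimum $g(0^{+})=q_0$, so $I_{\mathcal{G}_{f,g},g}(0,0)=-\log q_0=I_g(0)$.

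The main obstacle is the ``otherwise'' clause. The almost sure inequalities $Y^{f,g}\geq V_0^{f,g}\geq0$ place the empirical means in the closed set $\{y\geq z\geq0\}$, so Remark~\ref{rem:closed-sets-with-probability-1} already forces $I_{\mathcal{G}_{f,g},g}=\infty$ off that set; and for $0\leq y<z$ (with $z>0$) the computation in step (ii) gives $I_{\mathcal{G}_{f,\mathrm{id}}}(y/z)=\infty$ because $Y^{f,\mathrm{id}}\geq1$ a.s. The remaining direction $z=0<y$ is the delicate one — it is not covered by the closed-set argument, since $\{z>0\}\cup\{(0,0)\}$ is not closed — and it has to be read off from the Legendre-transform formula itself, where one must track $\sup\mathcal{D}(\mathcal{G}_{f,\mathrm{id}})$ carefully; this domain book-keeping for $f$, $g$ and $\mathcal{G}_{f,\mathrm{id}}$ throughout the change of variables, in the spirit of the proof of Proposition~\ref{prop:main-unitary-initial-population}, is where the care is needed.
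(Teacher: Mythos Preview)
Your plan is the paper's plan: compute $\Lambda(\beta,\gamma)=\log g\bigl(e^{\gamma}\mathcal{G}_{f,\mathrm{id}}(e^{\beta})\bigr)$ by conditioning on $V_0^{f,g}$, apply the shift $\delta=\gamma+\log\mathcal{G}_{f,\mathrm{id}}(e^{\beta})$, and then invoke Proposition~\ref{prop:main-unitary-initial-population}. Your execution is actually a touch cleaner than the paper's. Because the shift $\gamma\mapsto\delta$ is a bijection of $\mathbb{R}$ for each fixed $\beta\in\mathcal{D}(\mathcal{G}_{f,\mathrm{id}})$, the joint supremum decouples \emph{exactly} into $I_g(z)+z\,I_{\mathcal{G}_{f,\mathrm{id}}}(y/z)$; the paper instead bounds the joint supremum above by the sum of separate suprema and then recovers the matching lower bound by an approximating-sequence argument, which your observation makes unnecessary.

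Your instinct about the case $z=0<y$ is exactly right, and it is worth saying what happens there. The paper does not isolate it: after the substitution it writes the decoupled suprema over $\beta\in\mathbb{R}$ and reads off $\sup_{\beta}\beta y=\infty$. But the substitution is only defined for $\beta\in\mathcal{D}(\mathcal{G}_{f,\mathrm{id}})$, and if you compute directly from $\Lambda$ you find
\[
I_{\mathcal{G}_{f,g},g}(y,0)=\sup_{\beta\in\mathcal{D}(\mathcal{G}_{f,\mathrm{id}})}\beta y \;-\;\log q_0
\;=\;y\,I_f(1)+I_g(0),
\]
using $I_f(1)=\sup\mathcal{D}(\mathcal{G}_{f,\mathrm{id}})$ from the bijection in the proof of Proposition~\ref{prop:main-unitary-initial-population}. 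This is $+\infty$ whenever $q_0=0$ (so $I_g(0)=\infty$) or $\sup\mathcal{D}(\mathcal{G}_{f,\mathrm{id}})=\infty$, but can be finite otherwise; in that situation the ``otherwise'' clause as stated is not literally correct. Since every later use of the proposition (Propositions~\ref{prop:main-estimators} and~\ref{prop:minor-estimators}) adds the hypothesis $q_0=0$, this boundary issue is harmless for the applications, and your flagging of it is appropriate rather than a defect in your argument.
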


\begin{remark}\label{rem:implicit-hypotheses-for-prop-main}
We are assuming \textup{(}implicitly\textup{)} that $p_0>0$ and $\mu
_f\leq1$;
in fact, since we require that $\mathbb{E} [e^{\beta
Y^{f,g}+\gamma V_0^{f,g}} ]$ is finite in a neighborhood of
$(\beta,\gamma)=(0,0)$, we are assuming that $\mu_f<1$ and
$\mu_g<\infty$.
\end{remark}

\begin{proof}
The LDP is a consequence of Cram\'{e}r's theorem (Theorem
\ref{th:Cramer}) with $d=2$, and the rate function
$I_{\mathcal{G}_{f,g},g}$ is defined by
\[
I_{\mathcal{G}_{f,g},g}(y,z):=\sup_{\beta,\gamma\in\mathbb{R}} \bigl\{ \beta y+\gamma z-
\log\mathbb{E} \bigl[e^{\beta Y^{f,g}+\gamma
V_0^{f,g}} \bigr] \bigr\}.
\]
Throughout the proof, we restrict our attention on the pairs
$(y,z)$ such that $y\geq z\geq0$. In fact, almost surely, we have
$Y^{f,g}\geq V_0^{f,g}\geq0$, and therefore
$\bar{Y}_n\geq\bar{Z}_n\geq0$; thus, by Remark
\ref{rem:closed-sets-with-probability-1} we have
$I_{\mathcal{G}_{f,g},g}(y,z)=\infty$ if condition $y\geq z\geq0$
fails.

We remark that
$\mathbb{E} [s^{Y^{f,g}}|V_0^{f,g} ]=(\mathcal{G}_{f,\mathrm
{id}}(s))^{V_0^{f,g}}$,
and therefore
\[
\mathbb{E} \bigl[e^{\beta Y^{f,g}+\gamma V_0^{f,g}} \bigr]=\mathbb {E} \bigl[e^{\gamma V_0^{f,g}}
\bigl(\mathcal{G}_{f,\mathrm{id}}\bigl(e^\beta \bigr)\bigr)^{V_0^{f,g}}
\bigr] =g\bigl(e^\gamma\mathcal{G}_{f,\mathrm{id}}\bigl(e^\beta
\bigr)\bigr);
\]
thus,
\[
I_{\mathcal{G}_{f,g},g}(y,z)=\sup_{\beta,\gamma\in\mathbb{R}} \bigl\{ \beta y+\gamma z-\log
g\bigl(e^{\gamma+\log\mathcal{G}_{f,\mathrm
{id}}(e^\beta)}\bigr) \bigr\}.
\]
Furthermore, the function
\[
(\beta,\gamma)\mapsto\bigl(\beta,\gamma+\log\mathcal{G}_{f,\mathrm
{id}}
\bigl(e^\beta\bigr)\bigr)
\]
is a bijection defined on
$\mathcal{D}(\mathcal{G}_{f,\mathrm{id}})\times\mathbb{R}$, where
\[
\mathcal{D}(\mathcal{G}_{f,\mathrm{id}}):=\bigl\{\beta\in\mathbb{R}:\mathcal
{G}_{f,\mathrm{id}}\bigl(e^\beta\bigr)<\infty\bigr\}
\]
as in the proof of Proposition
\ref{prop:main-unitary-initial-population}; then, for
$\delta:=\gamma+\log\mathcal{G}_{f,\mathrm{id}}(e^\beta)$, we
obtain
\[
I_{\mathcal{G}_{f,g},g}(y,z)=\sup_{\beta,\delta\in\mathbb{R}} \bigl\{ \beta y+\bigl(\delta-
\log\mathcal{G}_{f,\mathrm{id}}\bigl(e^\beta\bigr)\bigr)z-\log g
\bigl(e^\delta\bigr) \bigr\}.
\]
Thus, we have (note that the last equality holds by Proposition
\ref{prop:main-unitary-initial-population})
\begin{align*}
I_{\mathcal{G}_{f,g},g}(y,z)&\leq\sup_{\beta\in\mathbb{R}} \bigl\{\beta y+z\log
\mathcal{G}_{f,\mathrm{id}}\bigl(e^\beta\bigr) \bigr\}+ \sup
_{\delta\in\mathbb{R}} \bigl\{\delta z-\log g\bigl(e^\delta\bigr) \bigr
\}
\\
&= \left\{ %
\begin{array}{@{}ll}
zI_{\mathcal{G}_{f,\mathrm{id}}}(y/z)+I_g(z)&\ \mbox{if}\ y\geq z>0,\\
I_g(0)&\ \mbox{if}\ y=z=0,\\
\infty&\ \mbox{otherwise.}
\end{array} %
\right.
\\
&= \left\{ %
\begin{array}{@{}ll}
yI_f (\frac{y-z}{y} )+I_g(z)&\ \mbox{if}\ y\geq z>0,\\
I_g(0)&\ \mbox{if}\ y=z=0,\\
\infty&\ \mbox{otherwise}.
\end{array} %
\right.
\end{align*}
We conclude by showing the inverse inequality
\begin{equation}
\label{eq:inverse-inequality} I_{\mathcal{G}_{f,g},g}(y,z)\geq\sup_{\beta\in\mathbb{R}} \bigl\{\beta
y+z\log\mathcal{G}_{f,\mathrm{id}}\bigl(e^\beta\bigr) \bigr\}+\sup
_{\delta\in
\mathbb{R}} \bigl\{\delta z-\log g\bigl(e^\delta\bigr) \bigr
\}.
\end{equation}
To this end, we take two sequences $\{\beta_n:n\geq1\}$
and $\{\delta_n:n\geq1\}$ such that
\[
\lim_{n\to\infty}\beta_ny-z\log\mathcal{G}_{f,\mathrm{id}}
\bigl(e^{\beta_n}\bigr) =\sup_{\beta\in\mathbb{R}} \bigl\{\beta y+z\log
\mathcal{G}_{f,\mathrm{id}}\bigl(e^\beta\bigr) \bigr\}
\]
and
\[
\lim_{n\to\infty}\delta_n z-\log g\bigl(e^{\delta_n}
\bigr)=\sup_{\delta\in\mathbb
{R}} \bigl\{\delta z-\log g\bigl(e^\delta
\bigr) \bigr\}.
\]
Then we have
\[
I_{\mathcal{G}_{f,g},g}(y,z)\geq\beta_n y+\bigl(\delta_n-\log
\mathcal {G}_{f,\mathrm{id}}\bigl(e^{\beta_n}\bigr)\bigr)z-\log g
\bigl(e^{\delta_n}\bigr),
\]
and we get \eqref{eq:inverse-inequality} letting $n$ go to
infinity.
\end{proof}

\section{Large deviations for estimators of $\mu_f$}\label{sec:estimators}
In this section, we prove two LDPs for two sequences of estimators
of the offspring mean $\mu_f$. Namely, if
$\{(\bar{Y}_n,\bar{Z}_n):n\geq1\}$ is the sequence in Proposition~\ref
{prop:main} (see also the precise assumptions in Remark
\ref{rem:implicit-hypotheses-for-prop-main}; in particular, we have
$\mu_f<1$), then we consider:
\begin{enumerate}
\item$ \{\frac{\bar{Y}_n-\bar{Z}_n}{\bar{Y}_n}:n\geq1 \}$;
\item$ \{\frac{\bar{Y}_n-\mu_g}{\bar{Y}_n}:n\geq1 \}$.
\end{enumerate}
Obviously, these estimators are well defined if the denominators
$\bar{Y}_n$ are different from zero; then, in order to have
well-defined estimators, we always assume that $q_0=0$ (where
$q_0$ is as in \eqref{eq:pmf-initial-population}), and, noting that,
in general, $I_g(0)=-\log q_0$, we have
\[
I_g(0)=\infty.
\]
Moreover, both sequences converge to
$\frac{\nu^{f,g}-\mu_g}{\nu^{f,g}}=\mu_f$ as $n\to\infty$ (see
$\nu^{f,g}$ in~\eqref{eq:mean-value-total-progeny}), and they coincide
when the initial population is deterministic
(equal to $\mu_g$ almost surely).

The LDPs of these two sequences are proved in Propositions
\ref{prop:main-estimators} and \ref{prop:minor-estimators}.
Moreover, Corollary \ref{cor:comparison} and Remark
\ref{rem:comparison} concern the comparison between the
convergence of the first sequence
$ \{\frac{\bar{Y}_n-\bar{Z}_n}{\bar{Y}_n}:n\geq1 \}$
and its analogue when the initial population is deterministic
(equal to the mean). Propositions \ref{prop:main-estimators} and
\ref{prop:minor-estimators} are proved by combining the
contraction principle (see e.g. Theorem 4.2.1 in
\cite{DemboZeitouni}) and Proposition \ref{prop:main} (note that
the rate function $I_{\mathcal{G}_{f,g},g}$ in Proposition
\ref{prop:main} is good, as it is required to apply the
contraction principle). We remark that, in the proofs of Propositions
\ref{prop:main-estimators} and
\ref{prop:minor-estimators}, we take into account that
$I_{\mathcal{G}_{f,g},g}(0,0)=\infty$ by Proposition
\ref{prop:main} and $I_g(0)=\infty$. At the end of this section, we
present some remarks on the comparison between the rate functions
in Propositions \ref{prop:main-estimators} and~\ref
{prop:minor-estimators} (Remarks
\ref{rem:rf-in-propminorestimator-could-be-finite-for-negative-arguments}
and \ref{rem:no-offsprings}).

We start with the LDP of the first sequence of estimators.

\begin{proposition}\label{prop:main-estimators}
Assume the same hypotheses of Proposition \ref{prop:main} and
$q_0=0$. Let $\{(Y_n,Z_n):n\geq1\}$ be i.i.d. random variables
distributed as $(Y^{f,g},V_0^{f,g})$. Let\break
$\{(\bar{Y}_n,\bar{Z}_n):n\geq1\}$ be the sequence of empirical
means defined by
$(\bar{Y}_n,\bar{Z}_n):= (\frac{1}{n}\sum_{k=1}^nY_k,\frac{1}{n}\sum_{k=1}^nZ_k )$
\textup{(}for all $n\geq1$\textup{)}. Then
$ \{\frac{\bar{Y}_n-\bar{Z}_n}{\bar{Y}_n}:n\geq1 \}$
satisfies the LDP with good rate function
$J_{\mathcal{G}_{f,g},g}$ defined by
\[
J_{\mathcal{G}_{f,g},g}(x):= \left\{ %
\begin{array}{@{}ll}
-\log g (e^{-\frac{I_f(x)}{1-x}} )&\ \mbox{if}\ x\in[0,1),\\
\infty&\ \mbox{otherwise}.
\end{array} %
\right.
\]
\end{proposition}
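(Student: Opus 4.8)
The plan is to apply the contraction principle to the LDP established in Proposition~\ref{prop:main}. Let $h:\{(y,z):y\ge z\ge 0\}\to\mathbb{R}$ be the map $h(y,z):=\frac{y-z}{y}$ for $y>0$ (extended in some convenient way at $y=0$, a point which will turn out to be irrelevant). The sequence $\{\frac{\bar Y_n-\bar Z_n}{\bar Y_n}:n\ge1\}$ is the image of $\{(\bar Y_n,\bar Z_n):n\ge1\}$ under (a continuous version of) $h$, so by the contraction principle it satisfies the LDP with good rate function
\[
J_{\mathcal{G}_{f,g},g}(x)=\inf\bigl\{I_{\mathcal{G}_{f,g},g}(y,z):y\ge z\ge0,\ \tfrac{y-z}{y}=x\bigr\}.
\]
First I would dispose of the degenerate point: since $q_0=0$ we have $I_g(0)=\infty$, hence $I_{\mathcal{G}_{f,g},g}(0,0)=\infty$ and also $I_{\mathcal{G}_{f,g},g}(y,z)=\infty$ whenever $z=0<y$ (as $I_g(0)=\infty$). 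Therefore in the infimum only pairs with $y\ge z>0$ contribute, and on that region $h$ is genuinely continuous, so the contraction principle applies cleanly and the value of $h$ at $(0,0)$ does not matter. Also the constraint $\frac{y-z}{y}=x$ forces $x\in[0,1)$ (since $0<z\le y$), so $J_{\mathcal{G}_{f,g},g}(x)=\infty$ for $x\notin[0,1)$, matching the claimed formula.

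The core computation is the minimization over the fiber $\{(y,z):y\ge z>0,\ z=(1-x)y\}$. Substituting $z=(1-x)y$ into $I_{\mathcal{G}_{f,g},g}(y,z)=yI_f\bigl(\tfrac{y-z}{y}\bigr)+I_g(z)=yI_f(x)+I_g((1-x)y)$, and reparametrizing by $z$ (so $y=z/(1-x)$), we must evaluate
\[
J_{\mathcal{G}_{f,g},g}(x)=\inf_{z>0}\Bigl\{\tfrac{z}{1-x}I_f(x)+I_g(z)\Bigr\}
=\inf_{z>0}\Bigl\{\tfrac{I_f(x)}{1-x}\,z+I_g(z)\Bigr\}.
\]
Writing $c:=\tfrac{I_f(x)}{1-x}\ge0$, this is $-\sup_{z>0}\{-cz-I_g(z)\}=-\sup_{z\in\mathbb R}\{(-c)z-I_g(z)\}$ (the supremum over $z<0$ is harmless since $I_g(z)=\infty$ there, as $g$ is a p.g.f. and its associated rate function lives on $[0,\infty)$). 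By the definition \eqref{def:rf-initial-population} of $I_g$ as the Legendre transform of $\gamma\mapsto\log g(e^\gamma)$, the Fenchel--Moreau theorem gives back $\log g(e^{-c})$ as the value of the convex conjugate of $I_g$ at $-c$; hence $\inf_{z}\{(-c)z+I_g(z)\}=-\log g(e^{-c})=-\log g\bigl(e^{-I_f(x)/(1-x)}\bigr)$, which is exactly the asserted formula. One should note that $-c\le0$ lies in the domain where $\log g(e^{\gamma})$ is finite (indeed $g(e^\gamma)<\infty$ for all $\gamma\le0$ since $\sum_r q_r=1$), so the conjugate duality is applied at a legitimate point; and when $x$ is such that $I_f(x)=\infty$, one reads $c=\infty$, $e^{-c}=0$, and $-\log g(0)=-\log q_0=\infty$, consistent with $q_0=0$.

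The step I expect to require the most care is the rigorous justification of the contraction principle in the presence of the mild discontinuity of $h$ at the origin, and the interchange of the two suprema implicit in passing from the joint rate function to the separated form. The clean way around the first issue is precisely the observation that $\{I_{\mathcal{G}_{f,g},g}\le\eta\}$ is contained in $\{y\ge z>0\}\cup\{(0,0)\}$, and since $(0,0)$ has infinite rate (by $q_0=0$) the level sets actually sit inside the open region $\{y>0\}$ where $h$ is continuous; thus one may either restrict $h$ to that region or invoke the version of the contraction principle for rate functions supported where the map is continuous. The second issue is not really an obstacle: the decomposition $I_{\mathcal{G}_{f,g},g}(y,z)=yI_f(x)+I_g(z)$ on the fiber already separates the variable being optimized ($z$, equivalently $y$) from the fixed parameter $x$, so the minimization is the one-dimensional Legendre-duality computation above, with no further exchange of order needed. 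A minor bookkeeping point is to confirm that the infimum over the fiber is attained or at least approached within the region $z>0$ (not merely at $z\to0^+$), which follows because $I_g$ is a good rate function with $I_g(0)=\infty$, so the function $z\mapsto cz+I_g(z)$ is coercive and lower semicontinuous on $(0,\infty)$ and its infimum coincides with the conjugate value computed on all of $\mathbb R$.
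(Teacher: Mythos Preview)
Your proof is correct and follows essentially the same route as the paper: apply the contraction principle to Proposition~\ref{prop:main}, parametrize the fiber by $z$ via $y=z/(1-x)$ to reduce to $\inf_{z>0}\{\frac{I_f(x)}{1-x}z+I_g(z)\}$, and then recognize this as minus the Legendre biconjugate of $\gamma\mapsto\log g(e^\gamma)$ evaluated at $-I_f(x)/(1-x)$. Your treatment of the discontinuity of $h$ at the origin is in fact more explicit than the paper's, which simply notes beforehand that $I_{\mathcal{G}_{f,g},g}(0,0)=\infty$ (from $q_0=0$) and then writes the contracted infimum directly over $y\ge z>0$.
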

\begin{proof}
By Proposition \ref{prop:main} and the contraction principle we
have the LDP of
$ \{\frac{\bar{Y}_n-\bar{Z}_n}{\bar{Y}_n}:n\geq1 \}$
with good rate function $J_{\mathcal{G}_{f,g},g}$ defined by
\[
J_{\mathcal{G}_{f,g},g}(x):=\inf \biggl\{I_{\mathcal
{G}_{f,g},g}(y,z):y\geq z>0,
\frac{y-z}{y}=x \biggr\}.
\]
The case $x\notin[0,1)$ is trivial because we have the infimum
over the empty set. For $x\in[0,1)$, we rewrite this
expression as follows (where we take into account the expression
of the rate function $I_{\mathcal{G}_{f,g},g}$ in Proposition
\ref{prop:main}):\vadjust{\eject}
\begin{align*}
J_{\mathcal{G}_{f,g},g}(x)&=\inf \biggl\{I_{\mathcal{G}_{f,g},g} \biggl(\frac
{z}{1-x},z
\biggr):z>0 \biggr\}
\\
&=\inf \biggl\{\frac{z}{1-x}I_f \biggl(\frac{\frac{z}{1-x}-z}{\frac
{z}{1-x}}
\biggr)+I_g(z):z>0 \biggr\}
\\
&=\inf \biggl\{\frac{z}{1-x}I_f(x)+I_g(z):z>0
\biggr\}\\
&=-\sup \biggl\{-z\frac
{I_f(x)}{1-x}-I_g(z):z>0 \biggr\};
\end{align*}
thus, since $I_g(z)=\infty$ for $z\leq0$, we obtain
$J_{\mathcal{G}_{f,g},g}(x)=-\log
g (e^{-\frac{I_f(x)}{1-x}} )$ by taking into account the
definition of $I_g$ in \eqref{def:rf-initial-population} and the
well-known properties of Legendre transforms (see e.g. Lemma 4.5.8
in \cite{DemboZeitouni}; see also Lemma 2.2.5(a) and Exercise
2.2.22 in \cite{DemboZeitouni} for the convexity and the lower
semicontinuity of $\gamma\mapsto\log g(e^\gamma)$).
\end{proof}

We have an immediate consequence of this proposition that
concerns the case with a deterministic initial population equal to
$\mu_g$ (almost surely). Namely, if we consider the probability
generating function $g_\diamond$ defined by
$g_\diamond(s):=s^{\mu_g}$ (for all $s$), then we mean the case
$g=g_\diamond$, and therefore:
\begin{itemize}
\item$V_0^{f,g_\diamond}=\mu_g$ almost surely; thus, $Z_n=\mu_g$
and $\bar{Z}_n=\mu_g$ almost surely (for all $n\geq1$);
\item$\{Y_n^{f,g_\diamond}:n\geq1\}$ are i.i.d. random variables
distributed as $Y^{f,g_\diamond}$, that is,
\[
Y^{f,g_\diamond}:=\mu_g+\sum_{i=1}^\tau
V_i^{f,g_\diamond},\quad \mbox {where}\ \tau:=\inf \bigl\{n
\geq0:V_n^{f,g_\diamond}=0 \bigr\};
\]
\item the rate function
$J_{\mathcal{G}_{f,g_\diamond},g_\diamond}$ is
\begin{equation}
\label{eq:main-estimators-rf-deterministic-initial-population} J_{\mathcal{G}_{f,g_\diamond},g_\diamond}(x)= \left\{ %
\begin{array}{@{}ll}
\mu_g\cdot\frac{I_f(x)}{1-x}&\ \mbox{if}\ x\in[0,1),\\
\infty&\ \mbox{otherwise,}
\end{array} %
\right.
\end{equation}
by Proposition \ref{prop:main-estimators}.
\end{itemize}

\begin{cor}[Comparison between $J_{\mathcal{G}_{f,g},g}$ in Proposition
\ref{prop:main-estimators} and $J_{\mathcal{G}_{f,g_\diamond},g_\diamond
}$]\label{cor:comparison}
We have $J_{\mathcal{G}_{f,g},g}(x)\leq
J_{\mathcal{G}_{f,g_\diamond},g_\diamond}(x)$ for all
$x\in\mathbb{R}$. Moreover the inequality turns into an equality
if and only if we have one of the following cases:
\begin{itemize}
\item$x\notin[0,1)$ and $J_{\mathcal{G}_{f,g},g}(x)=J_{\mathcal
{G}_{f,g_\diamond},g_\diamond}(x)=\infty$;
\item$x=\mu_f$ and $J_{\mathcal{G}_{f,g},g}(x)=J_{\mathcal
{G}_{f,g_\diamond},g_\diamond}(x)=0$;
\item$V_0^{f,g}$ is deterministic, equal to $\mu_g$, and
$J_{\mathcal{G}_{f,g},g}(x)=J_{\mathcal{G}_{f,g_\diamond},g_\diamond}(x)$
for all $x\in\mathbb{R}$.
\end{itemize}
\end{cor}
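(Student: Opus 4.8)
My plan is to reduce the claimed inequality, for arguments $x\in[0,1)$, to a single convexity statement about the cumulant generating function $\gamma\mapsto\log g(e^\gamma)$ of the initial population, and then to read off the equality cases from its \emph{strict} convexity. The case $x\notin[0,1)$ is immediate: by Proposition~\ref{prop:main-estimators} and \eqref{eq:main-estimators-rf-deterministic-initial-population} both $J_{\mathcal{G}_{f,g},g}(x)$ and $J_{\mathcal{G}_{f,g_\diamond},g_\diamond}(x)$ equal $\infty$, so equality holds, and this is the first listed case.

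Now fix $x\in[0,1)$ and set $c=c(x):=\frac{I_f(x)}{1-x}$. First I would observe that $c<\infty$: indeed $I_f$ is finite on $(\mathrm{essinf},\mathrm{esssup})=(0,s^{*})$ and at the atom $0$ (where $I_f(0)=-\log p_0<\infty$), with $s^{*}:=\sup(\mathrm{supp}\ \text{offspring})\ge1$ unless the offspring is a.s.\ $0$; in that trivial case the two rate functions coincide outright, and otherwise $[0,1)\subseteq[0,s^{*})$ so $I_f<\infty$ on $[0,1)$. By Proposition~\ref{prop:main-estimators} and \eqref{eq:main-estimators-rf-deterministic-initial-population},
\[
J_{\mathcal{G}_{f,g},g}(x)=-\log g\bigl(e^{-c}\bigr)=-\log\mathbb{E}\bigl[e^{-cV_0^{f,g}}\bigr],\qquad J_{\mathcal{G}_{f,g_\diamond},g_\diamond}(x)=\mu_g\,c .
\]
The map $\varphi(\gamma):=\log g(e^{\gamma})=\log\mathbb{E}[e^{\gamma V_0^{f,g}}]$ is convex (the very convexity already invoked in the proof of Proposition~\ref{prop:main-estimators}), it vanishes at $\gamma=0$, and, since $\mathbb{E}[e^{\gamma V_0^{f,g}}]$ is finite near $0$ by hypothesis, it is differentiable there with $\varphi'(0)=\mu_g$; hence $\varphi$ lies above its tangent line at $0$, i.e.\ $\varphi(\gamma)\ge\mu_g\gamma$ for every $\gamma$. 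Taking $\gamma=-c\le0$ gives $-\log g(e^{-c})\le\mu_g c$, which is exactly $J_{\mathcal{G}_{f,g},g}(x)\le J_{\mathcal{G}_{f,g_\diamond},g_\diamond}(x)$. (Equivalently this is Jensen's inequality applied to the convex function $u\mapsto e^{-cu}$.)

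For the equality cases with $x\in[0,1)$, equality forces $\varphi(-c)=\mu_g(-c)$, i.e.\ the convex function $\varphi$ touches its tangent line at $0$ again at the point $-c$. If $V_0^{f,g}$ is not a.s.\ constant, then $\varphi$ is strictly convex on the interior of its domain (its second derivative equals the variance of $V_0^{f,g}$ under the corresponding exponential tilting, which is strictly positive), so it can agree with an affine function at no two distinct points; hence $c=0$, i.e.\ $I_f(x)=0$, which for a Cram\'er rate function forces $x=\mu_f$ (and $\mu_f<1$, so $\mu_f\in[0,1)$), and then both sides equal $0$ — the second listed case. If instead $V_0^{f,g}$ is a.s.\ constant, it must equal its mean $\mu_g$, i.e.\ $g=g_\diamond$, whence $\varphi(\gamma)=\mu_g\gamma$ identically and $J_{\mathcal{G}_{f,g},g}\equiv J_{\mathcal{G}_{f,g_\diamond},g_\diamond}$ — the third listed case; the converse (each listed case gives equality) is clear. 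I expect the only slightly delicate points to be the finiteness of $I_f$ on $[0,1)$, needed for the displayed identities to be meaningful, and the upgrade from the plain Jensen inequality to the equality characterization via \emph{strict} convexity of $\varphi$ together with the standard identification of the zero set of $I_f$ with $\{\mu_f\}$; everything else is bookkeeping across the three cases.
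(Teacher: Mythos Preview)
Your proof is correct and follows essentially the same approach as the paper: both reduce the inequality for $x\in[0,1)$ to Jensen's inequality (you phrase it equivalently as the tangent-line bound for the convex cumulant generating function $\varphi$), and both read off the equality cases from the equality conditions in Jensen. Your write-up is more detailed than the paper's---in particular you spell out the finiteness of $I_f$ on $[0,1)$ and the strict-convexity argument identifying the equality cases, whereas the paper simply invokes ``the well-known properties of Jensen's inequality''---but the underlying idea is the same.
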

\begin{proof}
The case $x\notin[0,1)$ is trivial. On the contrary, if $x\in
[0,1)$, then by Jensen's inequality we have
\[
-\log g \bigl(e^{-\frac{I_f(x)}{1-x}} \bigr)=-\log\mathbb{E} \bigl[e^{-\frac{I_f(x)}{1-x}\cdot V_0^{f,g}}
\bigr]\leq\mu_g\cdot\frac{I_f(x)}{1-x};
\]
moreover, the cases where the inequality turns into an equality
follow from the well-known properties of Jensen's inequality.
\end{proof}

\begin{remark}[Comparison between convergence of estimators of $\mu
_f$]\label{rem:comparison}
Assume that $\mu_f>0$ and the initial population is not
deterministic. Then there exists $\eta>0$ such that
\begin{equation}
\label{eq:local-strict-inequality-between-rf} 0<J_{\mathcal{G}_{f,g},g}(x)<J_{\mathcal{G}_{f,g_\diamond},g_\diamond
}(x)\quad \mbox{for}\ x\in(
\mu_f-\eta,\mu_f+\eta)\setminus\{\mu_f\}.
\end{equation}
Thus, we can say that
$ \{\frac{\bar{Y}_n^{f,g_\diamond}-\mu_g}{\bar{Y}_n^{f,g_\diamond
}}:n\geq
1 \}$ converges to $\mu_f$ \textup{(}as $n\to\infty)$ faster than
$ \{\frac{\bar{Y}_n^{f,g}-\bar{Z}_n}{\bar{Y}_n^{f,g}}:n\geq
1 \}$; in fact, we can find $\varepsilon>0$ such that
\[
\lim_{n\to\infty}\frac{P (\llvert \frac{\bar{Y}_n^{f,g_\diamond}-\mu
_g}{\bar{Y}_n^{f,g_\diamond}}-\mu_f\rrvert \geq\varepsilon )}{
P (\llvert \frac{\bar{Y}_n^{f,g}-\bar{Z}_n}{\bar{Y}_n^{f,g}}-\mu
_f\rrvert \geq\varepsilon )}=0.
\]

We can repeat the same argument to say that
$ \{\frac{\bar{Y}_n^{f,g_\diamond}-\mu_g}{\bar{Y}_n^{f,g_\diamond
}}:n\geq
1 \}$ converges to $\mu_f$ \textup{(}as $n\to\infty)$ faster than
$\{\bar{X}_n:n\geq1\}$ in Lemma \ref{lem:LDP-offspring}. In fact,
we have $V_0^{f,g_\diamond}=\mu_g$ almost surely, $\mu_g$ is an
integer, and, since $\mu_g>0$ because $q_0=0$, we have $\mu_g\geq
1$; then we have
\[
J_{\mathcal{G}_{f,g_\diamond},g_\diamond}(x)=\mu_g\cdot\frac
{I_f(x)}{1-x}>I_f(x)>0
\quad \mbox{for all}\ x\in(0,1)\setminus\{\mu_f\}
\]
\textup{(}we can also consider the case $x=0$ if $\mu_g>1)$.
\end{remark}

Now we present the LDP for the second sequence of estimators.

\begin{proposition}\label{prop:minor-estimators}
Assume the same hypotheses of Proposition \ref{prop:main} and
$q_0=0$. Let $\{Y_n:n\geq1\}$ be i.i.d. random variables
distributed as $Y^{f,g}$. Let $\{\bar{Y}_n:n\geq1\}$ be the
sequence of empirical means defined by
$\bar{Y}_n:=\frac{1}{n}\sum_{k=1}^nY_k$ \textup{(}for all $n\geq1)$. Then
$ \{\frac{\bar{Y}_n-\mu_g}{\bar{Y}_n}:n\geq1 \}$
satisfies the LDP with good rate function $J_{\mu_g}$ defined by
\[
J_{\mu_g}(x):= \left\{ %
\begin{array}{@{}ll}
\inf \{\frac{\mu_g}{1-x}I_f (\frac{\frac{\mu_g}{1-x}-z}{\frac
{\mu_g}{1-x}} )+I_g(z):z>0 \}&\ \mbox{if}\ x<1,\\
\infty&\ \mbox{if}\ x\geq1.
\end{array} %
\right.
\]
\end{proposition}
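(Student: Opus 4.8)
The plan is to obtain this LDP exactly as Proposition \ref{prop:main-estimators} was obtained: start from the bivariate LDP of $\{(\bar Y_n,\bar Z_n):n\geq1\}$ in Proposition \ref{prop:main} (which holds under the stated hypotheses, together with $q_0=0$ so that $I_g(0)=\infty$), and push it forward through a continuous map by the contraction principle. The relevant map here is different from the one in Proposition \ref{prop:main-estimators}: we do not use $\bar Z_n$ in the numerator but rather the deterministic constant $\mu_g$. So I would first note that the map $\Phi(y,z):=\frac{y-\mu_g}{y}$ is continuous on $\{(y,z):y>0\}$, and that $\bar Y_n>0$ almost surely because $q_0=0$ forces $V_0^{f,g}\geq1$ and hence $Y^{f,g}\geq1$ almost surely. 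Thus $\Phi(\bar Y_n,\bar Z_n)=\frac{\bar Y_n-\mu_g}{\bar Y_n}$ is a well-defined continuous image of $(\bar Y_n,\bar Z_n)$ on the (closed, full-probability) set where the good rate function $I_{\mathcal{G}_{f,g},g}$ is finite, so the contraction principle applies and yields the LDP with good rate function
\[
J_{\mu_g}(x)=\inf\bigl\{I_{\mathcal{G}_{f,g},g}(y,z):y\geq z>0,\ \tfrac{y-\mu_g}{y}=x\bigr\}.
\]

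Next I would simplify this variational formula. The constraint $\frac{y-\mu_g}{y}=x$ has a solution with $y>0$ exactly when $x<1$, in which case it forces $y=\frac{\mu_g}{1-x}$; for $x\geq1$ the infimum is over the empty set, giving $J_{\mu_g}(x)=\infty$, which is the second branch of the claimed formula. For $x<1$, substituting $y=\frac{\mu_g}{1-x}$ into the expression for $I_{\mathcal{G}_{f,g},g}$ from Proposition \ref{prop:main} (using the branch $yI_f(\frac{y-z}{y})+I_g(z)$ valid for $y\geq z>0$, and noting that $z=y$ contributes $I_f(0)$, which is covered by the same formula) gives
\[
J_{\mu_g}(x)=\inf\Bigl\{\tfrac{\mu_g}{1-x}\,I_f\Bigl(\tfrac{\frac{\mu_g}{1-x}-z}{\frac{\mu_g}{1-x}}\Bigr)+I_g(z):z>0\Bigr\},
\]
which is precisely the first branch of the stated rate function. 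One should also check that the constraint $y\geq z$ in the infimum, i.e. $z\leq\frac{\mu_g}{1-x}$, is automatically enforced: for $z>\frac{\mu_g}{1-x}$ the quantity $\frac{y-z}{y}$ is negative, and since $Y^{f,g}\geq V_0^{f,g}$ almost surely we have $I_f$ effectively $+\infty$ on negative arguments in the relevant regime (more carefully, $I_{\mathcal{G}_{f,g},g}$ is already $+\infty$ there by Proposition \ref{prop:main}), so those $z$ do not lower the infimum and may be harmlessly included or excluded.

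The main point requiring care — and the only real obstacle — is the applicability of the contraction principle despite $\Phi$ being continuous only on $\{y>0\}$ rather than on all of $\mathbb{R}^2$. The standard fix, already implicitly used in the proof of Proposition \ref{prop:main-estimators}, is that $I_{\mathcal{G}_{f,g},g}$ is a good rate function whose effective domain is contained in $\{y\geq z>0\}$ (here $I_g(0)=\infty$ because $q_0=0$, so even the point $(0,0)$ is excluded), and $\Phi$ is continuous on an open set containing that effective domain; the contraction principle then applies verbatim to the restriction of $\Phi$. I would state this explicitly, citing Theorem 4.2.1 in \cite{DemboZeitouni} and the fact that $I_{\mathcal{G}_{f,g},g}(0,0)=\infty$ and $I_g(0)=\infty$, exactly as flagged in the discussion preceding the proposition. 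Everything else is the routine substitution above.
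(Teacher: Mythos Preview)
Your proposal is correct and follows exactly the same route as the paper: apply the contraction principle to the bivariate LDP of Proposition~\ref{prop:main} via the map $(y,z)\mapsto\frac{y-\mu_g}{y}$, note that the constraint set is empty for $x\geq1$ (since $\mu_g>0$), and for $x<1$ substitute $y=\frac{\mu_g}{1-x}$ into the expression for $I_{\mathcal{G}_{f,g},g}$. You supply more detail than the paper does on why the contraction principle is applicable despite the discontinuity of $\Phi$ at $y=0$ and on why the constraint $z\leq\frac{\mu_g}{1-x}$ may be dropped in the final infimum, but these are elaborations of the same argument rather than a different approach.
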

\begin{proof}
By Proposition \ref{prop:main} and the contraction principle we
have the LDP of $ \{\frac{\bar{Y}_n-\mu_g}{\bar{Y}_n}:n\geq
1 \}$ with good rate function $J_{\mu_g}$ defined by
\[
J_{\mu_g}(x):=\inf \biggl\{I_{\mathcal{G}_{f,g},g}(y,z):y\geq z>0,
\frac
{y-\mu_g}{y}=x \biggr\}.
\]
The case $x\geq1$ is trivial because we have the infimum over the
empty set (we recall that $\mu_g>0$ because $q_0=0$). For $x<1$, we
have
\[
J_{\mu_g}(x)=\inf \biggl\{I_{\mathcal{G}_{f,g},g} \biggl(\frac{\mu
_g}{1-x},z
\biggr):z>0 \biggr\},
\]
and we obtain the desired formula by taking into account the
expression of the rate function $I_{\mathcal{G}_{f,g},g}$ in
Proposition \ref{prop:main}.\vadjust{\eject}
\end{proof}

\begin{remark}[We can have $J_{\mu_g}(x)<\infty$ for some $x<0$]\label
{rem:rf-in-propminorestimator-could-be-finite-for-negative-arguments}
We know that, for $J_{\mathcal{G}_{f,g},g}$ in Proposition
\ref{prop:main-estimators}, we have
$J_{\mathcal{G}_{f,g},g}(x)=\infty$ for $x\notin[0,1)$. On the
contrary, as we see, we could have $J_{\mu_g}(x)<\infty$ for some
$x<0$. In order to explain this fact, we denote the minimum value
$r$ such that $q_r>0$ by $r_{\mathrm{min}}$; then we have
$\mu_g\geq r_{\mathrm{min}}$; moreover, we have
$\mu_g>r_{\mathrm{min}}$ if $q_{r_{\mathrm{min}}}<1$. In
conclusion, we can say that if $\mu_g>r_{\mathrm{min}}$, then the range
of negative values $x$ such that $J_{\mu_g}(x)<\infty$ is
\begin{equation}
\label{eq:range-of-negative-x} x\geq1-\frac{\mu_g}{r_{\mathrm{min}}};
\end{equation}
in fact, for $x<1$, both
$I_f (\frac{\frac{\mu_g}{1-x}-z}{\frac{\mu_g}{1-x}} )$
and $I_g(z)$ are finite for $z\in
[r_{\mathrm{min}},\frac{\mu_g}{1-x}]$, and therefore we can say
that $J_{\mu_g}(x)<\infty$ if
$r_{\mathrm{min}}\leq\frac{\mu_g}{1-x}$ or, equivalently, if
\eqref{eq:range-of-negative-x} holds.
\end{remark}

\begin{remark}[Estimators of $\mu_f$ when $\mu_f=0$]\label{rem:no-offsprings}
If $\mu_f=0$, that is, $f(s)=1$ for all $s$ or, equivalently, $p_0=1$,
then the rate function in Proposition \ref{prop:main-estimators} is
\[
J_{\mathcal{G}_{f,g},g}(x)= \left\{ %
\begin{array}{@{}ll}
0&\ \mbox{if}\ x=0,\\
\infty&\ \mbox{otherwise}.
\end{array} %
\right.
\]
Then it is easy to check that $J_{\mathcal{G}_{f,g},g}$ coincides
with $I_f$, and therefore $J_{\mathcal{G}_{f,g},g}$ coincides with
$J_{\mathcal{G}_{f,g_\diamond},g_\diamond}$ in
\eqref{eq:main-estimators-rf-deterministic-initial-population}
\textup{(}note that, in particular, we cannot have the strict
inequalities in
\eqref{eq:local-strict-inequality-between-rf} in Remark
\ref{rem:comparison} stated for the case $\mu_f>0$). Finally, if
$\mu_f=0$ \textup{(}and as usual $q_0=0$ or, equivalently, $\mu_g>0$),
then we
have $z=\frac{\mu_g}{1-x}$ in the variational formula of the rate
function in Proposition \ref{prop:minor-estimators}, and therefore
\begin{equation}
\label{eq:rf-prop-minor-estimators-muf=0} J_{\mu_g}(x)= \left\{ %
\begin{array}{@{}ll}
I_g (\frac{\mu_g}{1-x} )&\ \mbox{if}\ 1-\frac{\mu_g}{r_{\mathrm
{min}}}\leq x<1,\\
\infty&\ \mbox{otherwise}.
\end{array} %
\right.
\end{equation}
Note the rate function in
\eqref{eq:rf-prop-minor-estimators-muf=0} can also be derived by
combining the contraction principle and the rate function $I_g$
for the empirical means $\{\bar{Z}_n:n\geq1\}$; in fact, we have
$ \{\frac{\bar{Y}_n-\mu_g}{\bar{Y}_n}:n\geq
1 \}= \{\frac{\bar{Z}_n-\mu_g}{\bar{Z}_n}:n\geq
1 \}$, and the rate function $I_g$ is good by the hypotheses
of Proposition \ref{prop:minor-estimators} \textup{(}see Proposition
\ref{prop:main} and Remark
\ref{rem:implicit-hypotheses-for-prop-main}\textup{)}. Finally, we also note
that inequality \eqref{eq:range-of-negative-x} appears in the
rate function expression~\eqref{eq:rf-prop-minor-estimators-muf=0}.
\end{remark}

\section*{Acknowledgments}
The authors thank a referee for suggesting shorter proofs of
Propositions \ref{prop:main-unitary-initial-population} and
\ref{prop:main}. The support of GNAMPA (INDAM) is acknowledged.


%

\begin{thebibliography}{99}


\bibitem{AsmussenHering}
\begin{bbook}
\bauthor{\bsnm{Asmussen}, \binits{S.}},
\bauthor{\bsnm{Hering}, \binits{H.}}:
\bbtitle{Branching Processes}.
\bpublisher{Birkh\"{a}user},
\blocation{Boston}
(\byear{1983}).
\bid{doi={10.1007/978-1-4615-8155-0}, mr={0701538}}
\end{bbook}
%
\OrigBibText
Asmussen S., Hering H. (1983) Branching Processes. Birkh\"{a}user,
Boston.
\endOrigBibText
\bptok{structpyb}
\endbibitem

\bibitem{Athreya}
\begin{barticle}
\bauthor{\bsnm{Athreya}, \binits{K.B.}}:
\batitle{Large deviation rates for branching processes. I. Single type case}.
\bjtitle{Ann. Appl. Probab.}
\bvolume{4},
\bfpage{779}--\blpage{790}
(\byear{1994}).
\bid{doi={10.1214/aoap/1177004971}, mr={1284985}}
\end{barticle}
%
\OrigBibText
Athreya K.B. (1994) Large deviation rates for branching processes.
I. Single type case. Ann. Appl. Probab. 4, 779--790.
\endOrigBibText
\bptok{structpyb}
\endbibitem

\bibitem{AthreyaNey}
\begin{bbook}
\bauthor{\bsnm{Athreya}, \binits{K.B.}},
\bauthor{\bsnm{Ney}, \binits{P.E.}}:
\bbtitle{Branching Processes}.
\bpublisher{Springer},
\blocation{New York, Heidelberg}
(\byear{1972}).
\bid{mr={0373040}}
\end{bbook}
%
\OrigBibText
Athreya K.B., Ney P.E. (1972) Branching Processes.
Springer-Verlag, New York-Heidelberg.
\endOrigBibText
\bptok{structpyb}
\endbibitem

\bibitem{AthreyaVidyashankar}
\begin{barticle}
\bauthor{\bsnm{Athreya}, \binits{K.B.}},
\bauthor{\bsnm{Vidyashankar}, \binits{A.N.}}:
\batitle{Large deviation rates for branching processes. II. The multitype case}.
\bjtitle{Ann. Appl. Probab.}
\bvolume{5},
\bfpage{566}--\blpage{576}
(\byear{1995}).
\bid{doi={10.1214/\\aoap/1177004778}, mr={1336883}}
\end{barticle}
%
\OrigBibText
Athreya K.B., Vidyashankar A.N. (1995) Large deviation rates for
branching processes. II. The multitype case. Ann. Appl. Probab. 5,
566--576.
\endOrigBibText
\bptok{structpyb}
\endbibitem\vadjust{\eject}

\bibitem{BigginsBingham}
\begin{barticle}
\bauthor{\bsnm{Biggins}, \binits{J.D.}},
\bauthor{\bsnm{Bingham}, \binits{N.H.}}:
\batitle{Large deviations in the supercritical branching process}.
\bjtitle{Adv. Appl. Probab.}
\bvolume{25},
\bfpage{757}--\blpage{772}
(\byear{1993}).
\bid{doi={10.1017/S0001867800025738}, doi={10.2307/1427790}, mr={1241927}}
\end{barticle}
%
\OrigBibText
Biggins J.D., Bingham N.H. (1993) Large deviations in the
supercritical branching process. Adv. in Appl. Probab. 25,
757--772.
\endOrigBibText
\bptok{structpyb}
\endbibitem

\bibitem{DemboZeitouni}
\begin{bbook}
\bauthor{\bsnm{Dembo}, \binits{A.}},
\bauthor{\bsnm{Zeitouni}, \binits{O.}}:
\bbtitle{Large Deviations Techniques and Applications},
\bedition{2}nd edn.
\bpublisher{Springer},
\blocation{New York}
(\byear{1998}).
\bid{doi={10.1007/978-1-4612-5320-4}, mr={1619036}}
\end{bbook}
%
\OrigBibText
Dembo A., Zeitouni O. (1998) Large Deviations Techniques and
Applications (2nd Edition). Springer. New York.
\endOrigBibText
\bptok{structpyb}
\endbibitem

\bibitem{Dwass}
\begin{barticle}
\bauthor{\bsnm{Dwass}, \binits{M.}}:
\batitle{The total progeny in a branching process and a related random walk}.
\bjtitle{J. Appl. Probab.}
\bvolume{6},
\bfpage{682}--\blpage{686}
(\byear{1969}).
\bid{doi={10.1017/S0021900200026711}, mr={0253433}}
\end{barticle}
%
\OrigBibText
Dwass M. (1969). The total progeny in a branching process and a
related random walk. J. Appl. Probability 6, 682--686.
\endOrigBibText
\bptok{structpyb}
\endbibitem

\bibitem{FleischmannWachtel}
\begin{barticle}
\bauthor{\bsnm{Fleischmann}, \binits{K.}},
\bauthor{\bsnm{Wachtel}, \binits{V.}}:
\batitle{Lower deviation probabilities for supercritical Galton--Watson processes}.
\bjtitle{Ann. Inst. Henri Poincar\'{e} Probab. Stat.}
\bvolume{43},
\bfpage{233}--\blpage{255}
(\byear{2007}).
\bid{doi={10.1016/j.anihpb.2006.03.001}, mr={2303121}}
\end{barticle}
%
\OrigBibText
Fleischmann K., Wachtel V. (2007) Lower deviation probabilities
for supercritical Galton--Watson processes. Ann. Inst. H.
Poincar\'{e} Probab. Statist. 43, 233--255.
\endOrigBibText
\bptok{structpyb}
\endbibitem

\bibitem{GonzalezMolina}
\begin{barticle}
\bauthor{\bsnm{Gonz\'{a}lez}, \binits{M.}},
\bauthor{\bsnm{Molina}, \binits{M.}}:
\batitle{On the partial and total progeny of a bisexual Galton--Watson branching process}.
\bjtitle{Appl. Stoch. Models Data Anal.}
\bvolume{13},
\bfpage{225}--\blpage{232}
(\byear{1998}).
\bid{doi={10.1002/(SICI)1099-0747(199709/12)13:3/4<225::AID-ASM316>3.0.CO;2-9}, mr={1628630}}
\end{barticle}
%
\OrigBibText
Gonz\'{a}lez M., Molina M. (1998) On the partial and total progeny
of a bisexual Galton--Watson branching process. Appl. Stochastic.
Models Data Anal. 13, 225--232.
\endOrigBibText
\bptok{structpyb}
\endbibitem

\bibitem{Guttorp}
\begin{bbook}
\bauthor{\bsnm{Guttorp}, \binits{P.}}:
\bbtitle{Statistical Inference for Branching Processes}.
\bpublisher{John Wiley and Sons},
\blocation{New York}
(\byear{1991}).
\bid{mr={1254434}}
\end{bbook}
%
\OrigBibText
Guttorp P. (1991) Statistical Inference for Branching Processes.
John Wiley and Sons, New York.
\endOrigBibText
\bptok{structpyb}
\endbibitem

\bibitem{HamzaKlebaner}
\begin{barticle}
\bauthor{\bsnm{Hamza}, \binits{K.}},
\bauthor{\bsnm{Klebaner}, \binits{F.C.}}:
\batitle{How did we get here?}
\bjtitle{J. Appl. Probab. A}
\bvolume{51},
\bfpage{63}--\blpage{72}
(\byear{2014}).
\bid{doi={10.1239/jap/1417528467}, mr={3317350}}
\end{barticle}
%
\OrigBibText
Hamza K., Klebaner F.C. (2014) How did we get here? J. Appl.
Probab. 51A, 63--72.
\endOrigBibText
\bptok{structpyb}
\endbibitem

\bibitem{Harris}
\begin{bbook}
\bauthor{\bsnm{Harris}, \binits{T.E.}}:
\bbtitle{The Theory of Branching Processes}.
\bpublisher{Springer},
\blocation{Berlin}
(\byear{1963}).
\bid{mr={0163361}}
\end{bbook}
%
\OrigBibText
Harris T.E. (1963) The Theory of Branching Processes.
Springer-Verlag, Berlin.
\endOrigBibText
\bptok{structpyb}
\endbibitem

\bibitem{Jagers}
\begin{bbook}
\bauthor{\bsnm{Jagers}, \binits{P.}}:
\bbtitle{Branching Processes with Biological Applications}.
\bpublisher{John Wiley and Sons},
\blocation{London, New York, Sydney}
(\byear{1975}).
\bid{mr={0488341}}
\end{bbook}
%
\OrigBibText
Jagers P. (1975) Branching Processes with Biological Applications.
John Wiley and Sons, London--New York--Sydney.
\endOrigBibText
\bptok{structpyb}
\endbibitem

\bibitem{Kennedy}
\begin{barticle}
\bauthor{\bsnm{Kennedy}, \binits{D.P.}}:
\batitle{The Galton--Watson process conditioned on the total progeny}.
\bjtitle{J. Appl. Probab.}
\bvolume{12},
\bfpage{800}--\blpage{806}
(\byear{1975}).
\bid{doi={10.1017/S0021900200048750}, mr={0386042}}
\end{barticle}
%
\OrigBibText
Kennedy D.P. (1975) The Galton--Watson process conditioned on the
total progeny. J. Appl. Probability 12, 800--806.
\endOrigBibText
\bptok{structpyb}
\endbibitem

\bibitem{KimmelAxelrod}
\begin{bbook}
\bauthor{\bsnm{Kimmel}, \binits{M.}},
\bauthor{\bsnm{Axelrod}, \binits{D.E.}}:
\bbtitle{Branching Processes in Biology},
\bedition{2}nd edn.
\bpublisher{Springer},
\blocation{New York}
(\byear{2015}).
\bid{doi={10.1007/978-1-4939-1559-0}, mr={3310028}}
\end{bbook}
%
\OrigBibText
Kimmel M., Axelrod D.E. (2015) Branching Processes in Biology (2nd
Edition). Springer-Verlag, New York.
\endOrigBibText
\bptok{structpyb}
\endbibitem

\bibitem{KlebanerLiptser2006}
\begin{barticle}
\bauthor{\bsnm{Klebaner}, \binits{F.C.}},
\bauthor{\bsnm{Liptser}, \binits{R.}}:
\batitle{Likely path to extinction in simple branching models with large initial population}.
\bjtitle{J. Appl. Math. Stoch. Anal.}
\bvolume{23},
\bfpage{60376}
(\byear{2006}).
\bid{doi={10.1155/JAMSA/2006/60376}, mr={2221000}}
\end{barticle}
%
\OrigBibText
Klebaner F.C., Liptser R. (2006) Likely path to extinction in
simple branching models with large initial population. J. Appl.
Math. Stoch. Anal. 2006, Art. ID 60376, 23 pp.
\endOrigBibText
\bptok{structpyb}
\endbibitem

\bibitem{KlebanerLiptser2008}
\begin{barticle}
\bauthor{\bsnm{Klebaner}, \binits{F.C.}},
\bauthor{\bsnm{Liptser}, \binits{R.}}:
\batitle{Large deviations analysis of extinction in branching models}.
\bjtitle{Math. Popul. Stud.}
\bvolume{15},
\bfpage{55}--\blpage{69}
(\byear{2008}).
\bid{doi={10.1080/08898480701792477}, mr={2380468}}
\end{barticle}
%
\OrigBibText
Klebaner F.C., Liptser R. (2008) Large deviations analysis of
extinction in branching models. Math. Popul. Stud. 15, 55--69.
\endOrigBibText
\bptok{structpyb}
\endbibitem

\bibitem{Mode}
\begin{bbook}
\bauthor{\bsnm{Mode}, \binits{C.J.}}:
\bbtitle{Multitype Branching Processes. Theory and Applications}.
\bsertitle{Modern Analytic and Computational Methods in Science and Mathematics},
vol.~\bseriesno{34}.
\bpublisher{American Elsevier Publishing Co., Inc.},
\blocation{New York}
(\byear{1971}).
\bid{mr={0279901}}
\end{bbook}
%
\OrigBibText
Mode C.J. (1971) Multitype Branching Processes. Theory and
Applications. Modern Analytic and Computational Methods in Science
and Mathematics, No. 34 American Elsevier Publishing Co., Inc.,
New York.
\endOrigBibText
\bptok{structpyb}
\endbibitem

\bibitem{NeyVidyashankar2003}
\begin{barticle}
\bauthor{\bsnm{Ney}, \binits{P.E.}},
\bauthor{\bsnm{Vidyashankar}, \binits{A.N.}}:
\batitle{Harmonic moments and large deviation rates for supercritical branching processes}.
\bjtitle{Ann. Appl. Probab.}
\bvolume{13},
\bfpage{475}--\blpage{489}
(\byear{2003}).
\bid{doi={10.1214/aoap/1050689589}, mr={1970272}}
\end{barticle}
%
\OrigBibText
Ney P.E., Vidyashankar A.N. (2003) Harmonic moments and large
deviation rates for supercritical branching processes. Ann. Appl.
Probab. 13, 475--489.
\endOrigBibText
\bptok{structpyb}
\endbibitem

\bibitem{NeyVidyashankar2004}
\begin{barticle}
\bauthor{\bsnm{Ney}, \binits{P.E.}},
\bauthor{\bsnm{Vidyashankar}, \binits{A.N.}}:
\batitle{Local limit theory and large deviations for supercritical branching processes}.
\bjtitle{Ann. Appl. Probab.}
\bvolume{14},
\bfpage{1135}--\blpage{1166}
(\byear{2004}).
\bid{doi={10.1214/105051604000000242}, mr={2071418}}
\end{barticle}
%
\OrigBibText
Ney P.E., Vidyashankar A.N. (2004) Local limit theory and large
deviations for supercritical branching processes. Ann. Appl.
Probab. 14, 1135--1166.
\endOrigBibText
\bptok{structpyb}
\endbibitem

\bibitem{NeyVidyashankar2006}
\begin{barticle}
\bauthor{\bsnm{Ney}, \binits{P.E.}},
\bauthor{\bsnm{Vidyashankar}, \binits{A.N.}}:
\batitle{Corrections and acknowledgment for: ``Local limit theory and large
deviations for supercritical branching processes''}.
\bjtitle{Ann. Appl. Probab.}
\bvolume{16},
\bfpage{2272}--\blpage{2272}
(\byear{2006}).
\bid{doi={10.1214/105051606000000574}, mr={2288722}}
\end{barticle}
%
\OrigBibText
Ney P.E., Vidyashankar A.N. (2006) Corrections and acknowledgment
for: \lq\lq Local limit theory and large deviations for
supercritical branching processes\rq\rq. Ann. Appl. Probab. 16,
2272--2272.
\endOrigBibText
\bptok{structpyb}
\endbibitem

\bibitem{Pakes}
\begin{barticle}
\bauthor{\bsnm{Pakes}, \binits{A.G.}}:
\batitle{Some limit theorems for the total progeny of a branching process}.
\bjtitle{Adv. Appl. Probab.}
\bvolume{3},
\bfpage{176}--\blpage{192}
(\byear{1971}).
\bid{doi={10.1017/S0001867800037629}, mr={0283892}}
\end{barticle}
%
\OrigBibText
Pakes A.G. (1971) Some limit theorems for the total progeny of a
branching process. Advances in Appl. Probability 3, 176--192.
\endOrigBibText
\bptok{structpyb}
\endbibitem

\bibitem{Varadhan}
\begin{bchapter}
\bauthor{\bsnm{Varadhan}, \binits{S.R.S.}}:
\bctitle{Large deviations and entropy}.
In: \beditor{\bsnm{Greven}, \binits{A.}},
\beditor{\bsnm{Keller}, \binits{G.}},
\beditor{\bsnm{Warnecke}, \binits{G.}} (eds.):
\bbtitle{Entropy},
pp.~\bfpage{199}--\blpage{214}.
\bpublisher{Princeton University Press},
\blocation{Princeton}
(\byear{2003}).
\bid{mr={2035822}}
\end{bchapter}
%
\OrigBibText
Varadhan S.R.S. (2003) Large deviations and entropy. In: Entropy.
A. Greven, G. Keller and G. Warnecke, Editors, pp. 199--214.
Princeton University Press, Princeton.
\endOrigBibText
\bptok{structpyb}
\endbibitem

\end{thebibliography}
\end{document}